\newtheorem{theorem}{Theorem}[section]
\newtheorem{proposition}[theorem]{Proposition}
\newtheorem{lemma}[theorem]{Lemma}
\newtheorem{assumption}{Assumption}
\newtheorem{condition}{Condition}
\xdef\csname cal\x\endcsname{\noexpand 
	\ensuremath{\noexpand\mathcal{\x}}}
\xdef\csname scr\x\endcsname{\noexpand 
	\ensuremath{\noexpand\mathscr{\x}}}
\xdef\csname bb\x\endcsname{\noexpand 
	\ensuremath{\noexpand\mathbb{\x}}}
\xdef\csname rm\x\endcsname{\noexpand 
	\ensuremath{\noexpand\mathrm{\x}}}
\xdef\csname bf\x\endcsname{\noexpand 
	\ensuremath{\noexpand\mbf{\x}}}
\newcommand{\tq}[1]{{\textquotedblleft #1\textquotedblright}}
\newcommand{\s}[1][1]{\hspace{#1pt}}
\newcommand{\setst}{\mathbb} 
\newcommand{\rcll}{c{\`a}dl{\`a}g}
\newcommand{\rn}{\setst{R}}
\newcommand{\cst}{ \s[0.5] : \s[0.5] }
\begin{document}
\onehalfspacing


\title{Unbiased Simulation Estimators for Multivariate Jump-Diffusions}

\author{Guanting Chen$^\dagger$ \and Alex Shkolnik$^\Diamond$ \and  Kay Giesecke$^\ddagger$}

\date{\small 
$^\dagger$ Institute for Computational and Mathematical Engineering, Stanford University\\
$^\Diamond$ Department of Statistics and Applied Probability, University of California, Santa Barbara\\
$^\ddagger$Department of Management Science and Engineering, Stanford University\\
}

\maketitle

\begin{abstract}
We develop and analyze a class of unbiased Monte Carlo estimators for multivariate jump-diffusion processes with state-dependent drift, volatility, jump intensity and jump size. A change of measure argument is used to extend  existing unbiased estimators for the inter-arrival diffusion to include state-dependent jumps. Under standard regularity conditions on the coefficient and target functions, we prove the unbiasedness and finite variance properties of the resulting jump-diffusion estimators. Numerical experiments illustrate the efficiency of our estimators.

\end{abstract}

\section{Introduction}

The numerical solution of stochastic differential equations (SDEs)  has been a highly active area of research in the applied probability
and Monte Carlo simulation communities.  Historically, the main
emphasis has been placed on the classical case of diffusion
processes with concurrent developments in the literature on
simulation and numerical methods of parabolic equations arising
from Feynman-Kac type formulas.\footnote{The seminal reference
for the simulation of diffusions is \cite{kloeden1999}, with
developments since focused on numerical stability, unbiased
simulation, discontinuous problems, specific models and recent
approaches based on deep learning.  A useful reference for PDE
approaches is \cite{tavella2000} but these typically suffer from
challenges involving the curse of dimensionality.} However, many
applications involve models that are also event-driven in the sense that some of the stochastic uncertainty is represented by jumps. Indeed, a large
literature on SDEs with
jumps exists to address the theory and applications. Such SDEs
commonly arise in finance, economics, insurance, epidemiology,
chemistry and other areas. However, the literature on simulation
methods for jump-diffusions has received significantly
less attention than the more classical diffusion counterpart.

This paper develops simulation estimators for multi-variate jump-diffusion processes with state-dependent coefficients for the
drift, diffusion and jump characteristics. That is, we 
consider $\bbR^d$-valued Markov processes solving
the SDE
\begin{align} \label{itosde}
d X_t = \mu(X_t) dt + \sigma(X_t) d W_t + 
\int_{\bbR^d_+}c(X_{t-}, z)\s M(dt, dz)
\end{align}
where $W$ is a standard Brownian motion and $M$ is a Poisson
random measure. The drift and diffusion coefficient functions $\mu$
and $\sigma$ are associated with the classical setting described
above. The third term imparts jumps into the dynamics as
governed by the measure $M$ and $c$, the jump coefficient
function. The jumps need not be distributed according to a
Poisson process as the jump coefficient $c$ is state dependent.


A standard approach to simulating (\ref{itosde}) is discretization. While broadly applicable and easy to implement, discretizaton
methods generate biased simulation estimators and their error
analysis always entails additional assumptions on the coefficient functions.\footnote{The
classic references for discretization methods for diffusions
with jumps is \cite{platen2010}. See also \cite{shkolnik2021}
for more recent results.} Simulation bias is undesirable not only due to
the fact that the solution is approximate but also due to issues
of numerical stability that algorithms can exhibit for certain
models or parameter ranges.
To address these issues, several approaches to unbiased
simulation have been developed in recent decades.	The most
ambitious of these is exact sampling. Exact acceptance-rejection schemes have been developed for one-dimensional diffusions by \cite{beskos2005} \footnote{See also \cite{beskos2006} and \cite{chen2013} for extensions.}. Extensions of the former to one-dimensional SDEs with jumps have been developed in
\cite{casella2011}, \cite{giesecke2013},  \cite{goncalves2014}, and \cite{pollock2016exact} under various sets of assumptions. Recently, new techniques developed by \cite{blanchet2020} led to the first exact scheme for multivariate diffusions \footnote{See also \cite{blanchet2017} for Tolerance-Enforced Simulation for multivariate diffusion process.}. The multivariate setting however presents unique challenges; for
example, the run time of the algorithm in \cite{blanchet2020} is
infinite in expectation. An
alternative to exact sampling is unbiased
estimation, which does not involve exact samples, but nevertheless yields simulation estimators for functions of the process that are free of bias.  This is the approach we pursue in this paper.

For an $\bbR^d$-valued stochastic process $X$, an objective
function $f \cst \bbR^d \to \bbR$, and some time horizon $T >
0$, an unbiased estimator of $f(X_T)$ is a random variable
$\Xi$ such that $\mathbb{E}[\Xi] = \mathbb{E}[f(X_T)]$. We remark that
$\Xi$ is not required to be an exact sample of $f(X_T)$, and
for example, $\Xi$ may be negative all the while $f$ is
positive valued. 
\cite{glynn2015}, \cite{ballyhiga2015}, \cite{agarwal2017}, \cite{doumbia2017},
\cite{anderssonhiga2017},  \cite{labordere2017} and \cite{chen2020} have developed and analyzed unbiased estimators for diffusion processes. These estimators are based on ideas that draw from the literature
on multi-level Monte Carlo and formulations that
date back to the study of partial differential equations
in \cite{levi1907}. An earlier effort by \cite{wagner1989} is based on 
solutions to integral equations via the von Neumann--Ulam 
scheme.
The extension of these diffusion approaches to include state-dependent jumps is not obvious. The
difficulties trace to the particular form of the infinitesimal
generator of a jump-diffusion, which has properties that
distinguish it from the partial differential operators
arising in the diffusion case.
For this reason new approaches are required for the design of
unbiased simulation estimators for multivariate jump-diffusions  (\ref{itosde}).

We construct unbiased simulation estimators for (\ref{itosde}) from existing unbiased diffusion estimators. Our approach entails a change of measure that 
facilitates the 
exact sampling of the jump times of the process. Specifically, under the sampling
measure, the jump inter-arrival times are exponentially
distributed. The sampling measure further possesses a
convenient conditional independence property that
preserves the law of the diffusion process between the jump
times. As a consequence, any sampling approach
may be used to generate the diffusion skeletons.
This allows for a \tq{black-box} implementation in which 
the next jump time of the process is sampled first, and 
then virtually any existing diffusion scheme may be used to sample 
the diffusion on the generated time interval. For our
purposes, the black-box is any unbiased simulation
estimator for a multivariate diffusion. We provide sufficient
conditions for a diffusion estimator to be extensible. Under standard regularity conditions on the coefficient and objective functions for (\ref{itosde}), we prove the unbiasedness and finite variance properties of our jump-diffusion estimators. 

Numerical experiments indicate that our estimators are significantly more efficient than existing exact sampling and discretization estimators. In the one-dimensional special case treated by \cite{chen2019}, our scheme runs faster by a factor of $100+$ than the exact sampling scheme of \cite{giesecke2013}. In this paper, we run our scheme against the discretization scheme of 
\cite{shkolnik2021} for two multivariate models, one
meeting our technical hypotheses and the other one violating them. The results indicate the superior performance of our estimator in both cases.

The rest of the paper is organized as follows: In Section 2 we
introduce the problem. In Section 3 we develop  our change-of-measure approach of extending unbiased diffusion estimators to jump-diffusions.  In Section 4 we
illustrate our approach for the ``parametrix''
diffusion estimator; we modify the original regularity
conditions to enable extensibility of the estimator. In
Section 5 we perform numerical experiments. Appendices contain the proofs.

\section{Formulation} \label{S:2}
The goal of this paper is to develop an unbiased estimator $\Xi$ such that $\mathbb{E}[\Xi] = \mathbb{E}[f(X_T)]$, where $X \in \mathbb{R}^d$ is a jump-diffusion process solving \eqref{itosde} and $T > 0$ is the time horizon. We fix a complete probability space $(\Omega,\mathbb{P},\mathcal{F})$ equipped
with a filtration $\mathbb{F} = (\mathcal{F}_t)_{t \ge 0}$ satisfying the usual conditions \citep{protter2005}. For integers
$m,d \ge 1$, a standard $m$-dimensional
Brownian motion $W$ and functions
$\mu \cst \bbR^d \to \bbR^d$, $\sigma \cst \bbR^d \to 
\bbR^{d \times m}$ and $c \cst \bbR^d \times \bbR^d_+
\to \bbR^d$ we write \eqref{itosde} in the form
\begin{equation}\label{s1_JD_sde}
\begin{aligned}
X_t = X_0 + \int_0^t\mu(X_s) ds + 
\int_0^t\sigma(X_s) dW_s 
+ \sum_{n=1}^{N_t} h(X_{T_n-},R_n)
\end{aligned}
\end{equation}
for $t \in [0, T]$, some function $h \cst \bbR^d \times \bbR^d
\to \bbR^d$, a counting process $N$ with jump times
$(T_n)_{n \ge 1}$ and a sequence $\{R_n\}_{n \ge 1}$
of random variables where every $R_n \in \bbR^d$ is
independent of $X_{T_{n^-}}$ and is distributed according to a law
$\nu$. The intensity 
(or conditional arrival rate) of the process $N$ 
at time $t$ is given
by a random measure assigned to the set $\{ z \in \bbR^d_+
\cst c(X_{t-}, z) \neq 0\}$ for $c$ in \eqref{itosde}.
The existence and uniqueness of the process $X$ 
is guaranteed \citep[Theorem 3.8]{cinlar2011} under the 
assumptions on the coefficients that we impose below.

The intensity of $N$ may be defined as $\lambda(X)$ for
some function $\lambda \cst \bbR^d \to [0,\infty)$ and we
will assume $\lambda$ to be bounded. The jump-diffusion
$X$ may be constructed iteratively, by evolving a diffusion
$Y$ up to its killing time that arrives with rate 
$\lambda(Y)$. At each such time the process incurs
a state dependent jump governed by the function $h$ and
the diffusion regenerates. Next we make this construction 
precise.

We define a process $Y$ on $[0,T]$ as a solution to the SDE
\begin{align} \label{ysde}
  dY_t = \mu (Y_t) dt + \sigma (Y_t) dW_t, \,\,\,\,\, Y_0 = x
\end{align}
for the same functions $\mu,\sigma$ and $W$ defined in $\eqref{s1_JD_sde}$. A (weak) solution of $\eqref{s1_JD_sde}$ may be constructed from
 i.i.d. copies $\{W^n\}_{n \in \mathbb{N}}$ of the $W$
and a sequence of standard exponential random variables 
$\{\mathcal{E}_n\}_{n \in \mathbb{N}}$. To this end, 
for the intensity function $\lambda$, define $A$ as
\begin{align} \label{ycomp}
   A_t = \int_0^t \lambda (Y_s) ds,
\end{align}
and take $(Y^n,A^n)$ to be defined via
$\eqref{ysde}$ and $\eqref{ycomp}$ but with respect to $W^n$.
This pair corresponds to the interval $[T_n,T_{n+1})$
with the right endpoint given by the relation 
$T_{n+1} = T_n + (A^n)_{\mathcal{E}_n}^{-1}$. 
Now, starting at $T_0 = 0$ and $X_{0^-} =0$, we proceed as,
\begin{equation} \label{construct}
\begin{aligned}
 X_{T_{n}} &= X_{T_{n}-} + h(X_{T_n^-}, R_n)  \\
 X_t &= Y^{n}_{t - T_{n}};
  \hspace{0.16in} t \in (T_n, T_{n+1}), \hspace{0.16in} Y^n_0 = X_{T_n}.
\end{aligned}
\end{equation}
A solution $X$ that follows the above 
recipe is {\rcll} and enjoys the strong Markov property at each stopping time $T_n$.

We will use this solution to construct the unbiased estimator for $\mathbb{E}[f(X_T)]$. Before we introduce the main results, a few assumptions are stated below, where we denote $C^1_b(\mathbb{R}^d)$ to be the set of functions in $\mathbb{R}^d$ that is bounded and continuously differentiable.

\begin{assumption} \label{dasm}
We have the drift function $\mu \in C^1_b (\mathbb{R}^d)$.
The diffusion matrix $\sigma\sigma^T \in C^1_b (\mathbb{R}^d)$ and is uniformly elliptic\footnote{ For all $y \in \mathbb{R}^d$, $\exists$ $a_{\max} > a_{\min} > 0$ s.t. $a_{\min}|y|^2 \leq y^T\sigma(X_t(\omega))\sigma(X_t(\omega))^Ty \leq a_{\max}|y|^2$.}.
\end{assumption}

\begin{assumption} \label{jasm}
The intensity function $\lambda \in C_b^1 (\mathbb{R}^d)$ and there exist $\lambda_2 > \lambda_1 > 0$ such that $\lambda_1 \leq \lambda(x) \leq \lambda_2$ for all $x \in \mathbb{R}^d$. Moreover, the function
$h(x, R_n)$ is uniformly bounded such that $||h||_{\infty} \leq v$. Lastly, $h(x, R_n)$ has an uniformly bounded finite second moment and we can sample $R_n$ directly from its distribution $\nu$.
\end{assumption}

Assumption \ref{dasm} and the bounded intensity of Assumption \ref{jasm} guarantee the existence and uniqueness of
the solution of $\eqref{ysde}$ and $\eqref{s1_JD_sde}$.  Assumption \ref{jasm}
ensures that $X$ solving $\eqref{s1_JD_sde}$ is
non-explosive.  Bias free samples and a finite second moment of $h(x, R_n)$ are required for constructing our jump-diffusion estimator.

The intuition for deriving our estimator is that, between every jump time, for $t \in (T_{i-1}, T_i)$, we approximate $X_t$ (which is also $Y_{t-T_{i-1}}$) by an Euler process $Y^\pi$ and a correction functional such that we have unbiased estimator for functions of diffusions. Knowing that within each section $(T_{i-1}, T_i)$ we can get unbiased estimation, the challenge becomes how to make jump time analytically tractable. In the next section, we are going to state the main result based on this formulation.

\section{Estimator}
\subsection{Main Results}
In this section we present the main results. For clarity, we first introduce the notion of unbiased diffusion estimator. Generally speaking, if we consider a diffusion process $Y$ defined in $\eqref{ysde}$, for any sequence $\{t_i\}_{i=1}^m$ such that $t_i < t_{i+1}$ and $t_i \in (0, T)$, we define an Euler approximation process $Y^\pi$ such that
\begin{equation*}
    \begin{aligned}
    Y_t^{\pi} = Y_{t_i}^{\pi} + \mu(Y_{t_i}^{\pi})(t-t_i) + \sigma(Y_{t_i}^{\pi})W_{t-t_i}^{i},\,\,\,\,\,t\in(t_i, t_{i+1}],\,\,\,\,\,Y_0^{\pi} = x.
    \end{aligned}
\end{equation*}
A number of unbiased diffusion estimators have been developed in the literature \citep{wagner1989,ballyhiga2015,labordere2017,agarwal2017,doumbia2017}. More specifically, for a diffusion process $Y$ and bounded function $f: \mathbb{R}^n \to \mathbb{R}$, those estimators feature an Euler approximation $Y^\pi$ and a correction functional $\Theta:\left(\mathbb{R}^{n}\times [0, T]\right)\times\mathbb{R}\to \mathbb{R}$ such that
\begin{equation}\label{eqn_diffusion}
\begin{aligned}
\mathbb{E}[f(Y_T)] = \mathbb{E}[f(Y_T^\pi)\Theta(Y^{\pi},T)].
\end{aligned}
\end{equation}
The differences between those estimators mentioned above are the way they sample the Euler process $Y^\pi$ and the way they define $\Theta$. Our jump-diffusion estimator could be derived from any of the diffusion estimators mentioned above. Henceforth, we will refer to estimators in the above form as ``black-box'' estimators.

For the jump-diffusion process, we add the jump intensity process $\lambda(X)$ as another dimension to the diffusion formulation \eqref{ysde}. From the weak solution construction \eqref{construct}, we know that between jump times the jump-diffusion process has the same dynamics as the diffusion process. Therefore, with a little abuse of notation, by ignoring the superindex in $\eqref{construct}$, we define the following auxiliary diffusion process $Z = (Y,\bar{A})$, where $\sigma_A > 0$ is chosen by design and $\bar{W}$ is a $d$-dimensional Brownian motion independent of $W$.
\begin{equation} \label{eqn_sde_coupled}
\begin{aligned}
dY_t &= \mu(Y_t)dt + \sigma(Y_t)dW_t\\
d\bar{A}_t &= \lambda(Y_t)dt + \sigma_Ad\bar{W}_t\\
Y_0 &= x
, \,\,\bar{A}_0 = 0.
\end{aligned}
\end{equation}
From above, we know the process $Y$ starts from $x$ and the process $\bar{A}$ starts from $0$.  Throughout, $\mathbb{E}_{x}$ denotes taking expectation conditioned on $Y$ (or its jump-diffusion counterpart $X$) starting from $x \in \mathbb{R}^d$. 

The reason we introduce the other dimension $\bar{A}$ is to incorporate the change of measure, which helps to sample the jump times. To measure the effect of measure change, we define the processes $L_i(Z)$ by 
\begin{equation} \label{L_process}
\begin{aligned}
L_1(Z)_t &= \exp\left(-\bar{A}_t + t\lambda(Y_0)\right)\frac{\lambda(Y_t)}{\lambda(Y_0)}\\
L_2(Z)_t &= \exp\left(-\bar{A}_t + t\lambda(Y_0)\right).
\end{aligned}
\end{equation}

As will be shown later, with $L_i(Z)$ we can have the measure changed such that the jump time can be sampled more efficiently. However, incorporating $L_i(Z)$ in the objective function adds an exponential term, which requires additional efforts to address the regularity issue (recall that the black-box estimator requires the objective function to be bounded).

We say $f : \mathbb{R}^d \to \mathbb{R}$ has exponential growth if there are constants $c_1, c_2 > 0$ such that $|f(x)| \leq e^{c_1||x||_1+c_2}$ for all $x \in \mathbb{R}^d$. We require the following condition for the black-box diffusion estimator to accommodate the construction of jump-diffusion estimator.

\begin{condition} \label{cdn_blackbox}
Let $Z$ be a $d+1$-dimensional diffusion process defined in \eqref{eqn_sde_coupled}. 
$g : \mathbb{R}^{d+1} \to \mathbb{R}$  is a function of the form $g(z) = \exp(-a)f(y)$, where $z = (y, a)$ and $f: \mathbb{R}^d \to \mathbb{R}$ is a function with exponential growth.
There exists a black-box algorithm which takes an Euler approximation $Z^{\pi}$ of the process $Z$ and a correction functional $\Theta:\left(\mathbb{R}^{n+1}\times [0, T]\right)\times\mathbb{R}\to \mathbb{R}$ such that for $T > 0$, we have
\begin{equation}\label{eqn_cdn_blackbox}
\begin{aligned}
\mathbb{E}[g(Z_T)] = \mathbb{E}[\exp(-\bar{A}_T)f(Y_T)] = \mathbb{E}[\exp(-\bar{A}_T^\pi)f(Y_T^{\pi})\Theta(Z^{\pi},T)].
\end{aligned}
\end{equation}
\end{condition}
Condition \ref{cdn_blackbox} is a more general version of \eqref{eqn_diffusion} with less restrictive regularity condition such that $g(z) = \exp(-a)f(y)$ is not bounded. Another important feature for a valid Monte Carlo estimator is the finite variance property.  We require the following:

\begin{condition} \label{cdn_var}
For $f : \mathbb{R}^d \to \mathbb{R}$ with exponential growth, and $Z^{\pi}$ be the Euler process of a black-box estimator in Condition \ref{cdn_blackbox}. There exists constant $M_T$ such that:
\begin{equation*}
\begin{aligned}
\mathbb{E}[\left(\exp\left(-\bar{A}_T^{\pi}\right)f(Y^\pi_T)\Theta(Z^{\pi}, T)\right)^2] < M_T\exp\left(2c_1||x||_1\right),
\end{aligned}
\end{equation*}
where $x = (x^{(1)}, \cdots x^{(d)})$ and $(x,0)$ is the starting point of $Z^\pi$.
\end{condition}
Condition \ref{cdn_var} characterizes the moment condition for the black-box estimator. With Condition \ref{cdn_blackbox} and \ref{cdn_var}, by denoting
\begin{equation}\label{L_process_theta}
\begin{aligned}
L_i^{\Theta}(Z^{\pi})_T = L_i(Z^{\pi})_T\Theta(Z^{\pi}, T),
\end{aligned}
\end{equation}
we have the following results.
\begin{proposition}\label{prop_two_estimators}
Let $f : \mathbb{R}^d \to \mathbb{R}$ be a function of exponential growth, $Z = (Y, \bar{A})$ be the process defined in \eqref{eqn_sde_coupled} and $Z^{\pi} = (Y^{\pi}, \bar{A}^{\pi})$ be the Euler process of the black-box estimator satisfying Condition \ref{cdn_blackbox} and \ref{cdn_var}. $T$ and $T_1$ are constants such that $T_1 \leq T$. The estimators $L_1^{\Theta}(Z^{\pi})_{T_1}$ and $L_2^{\Theta}(Z^{\pi})_Tf(Y_T^{\pi})$ have the property that
\begin{equation*}
    \begin{aligned}
    \mathbb{E}[L_1^{\Theta}(Z^{\pi})_{T_1}] = \mathbb{E}_{x}[L_1(Z)_{T_1}],\,\,\quad
    \mathbb{E}[L_2^{\Theta}(Z^{\pi})_Tf(Y_T^{\pi})] = \mathbb{E}_{x}[L_2(Z)_Tf(Y_T)].
    \end{aligned}
\end{equation*}
\end{proposition}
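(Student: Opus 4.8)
The plan is to reduce both identities directly to Condition \ref{cdn_blackbox}. The key observation is that the auxiliary diffusion $Z=(Y,\bar{A})$ of \eqref{eqn_sde_coupled} and its Euler discretization $Z^{\pi}=(Y^{\pi},\bar{A}^{\pi})$ both start from the \emph{deterministic} point $(x,0)$, so every factor depending only on $Y_0^{\pi}=x$ is a genuine constant and may be pulled out of the expectation. After doing so, what remains in each case is an expression of the ``black-box'' form $\mathbb{E}[\exp(-\bar{A}^{\pi}_{\cdot})\,h(Y^{\pi}_{\cdot})\,\Theta(Z^{\pi},\cdot)]$, to which \eqref{eqn_cdn_blackbox} applies provided the bracketed objective $h$ has exponential growth.

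I would treat the second estimator first. Expanding \eqref{L_process} and \eqref{L_process_theta} and using $Y_0^{\pi}=x$,
\[
L_2^{\Theta}(Z^{\pi})_T\, f(Y_T^{\pi}) \;=\; e^{T\lambda(x)}\,\exp\!\big(-\bar{A}_T^{\pi}\big)\,f(Y_T^{\pi})\,\Theta(Z^{\pi},T).
\]
Since $f$ has exponential growth by hypothesis, $g(z)=\exp(-a)f(y)$ with $z=(y,a)$ is exactly of the type admitted by Condition \ref{cdn_blackbox}. Taking expectations, invoking \eqref{eqn_cdn_blackbox} at horizon $T$, and absorbing the constant $e^{T\lambda(x)}=e^{T\lambda(Y_0)}$ back into the exponential yields $\mathbb{E}[L_2^{\Theta}(Z^{\pi})_T f(Y_T^{\pi})]=\mathbb{E}_x[\exp(-\bar{A}_T+T\lambda(Y_0))f(Y_T)]=\mathbb{E}_x[L_2(Z)_T f(Y_T)]$; here the subscript $x$ simply records that $Z$, like $Z^{\pi}$, starts from $(x,0)$. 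The first estimator is handled identically once we note that $\lambda$, being bounded by Assumption \ref{jasm}, has exponential growth (take $c_1=0$). With $Y_0^{\pi}=x$,
\[
L_1^{\Theta}(Z^{\pi})_{T_1} \;=\; \frac{e^{T_1\lambda(x)}}{\lambda(x)}\,\exp\!\big(-\bar{A}_{T_1}^{\pi}\big)\,\lambda(Y_{T_1}^{\pi})\,\Theta(Z^{\pi},T_1),
\]
so Condition \ref{cdn_blackbox} applied with $f$ replaced by $\lambda$ and horizon $T_1\le T$ gives $\mathbb{E}[\exp(-\bar{A}_{T_1}^{\pi})\lambda(Y_{T_1}^{\pi})\Theta(Z^{\pi},T_1)]=\mathbb{E}[\exp(-\bar{A}_{T_1})\lambda(Y_{T_1})]$; reinstating the prefactor and recognizing $\lambda(x)=\lambda(Y_0)$ gives $\mathbb{E}[L_1^{\Theta}(Z^{\pi})_{T_1}]=\mathbb{E}_x[L_1(Z)_{T_1}]$.

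The only point that genuinely requires care — and the reason the statement is not entirely mechanical — is integrability: the manipulations above tacitly assume the relevant expectations are finite, so that \eqref{eqn_cdn_blackbox} may be invoked and constants moved in and out without ambiguity. On the Euler side this is precisely the role of Condition \ref{cdn_var}, which bounds the second moment (hence, by Cauchy--Schwarz, the first moment) of $\exp(-\bar{A}_T^{\pi})f(Y_T^{\pi})\Theta(Z^{\pi},T)$, and the same bound applied with $f=\lambda$ and horizon $T_1$ controls the $L_1$ term since $\lambda$ is bounded. On the continuous side, $\bar{A}_t=\int_0^t\lambda(Y_s)\,ds+\sigma_A\bar{W}_t$ carries a Gaussian component independent of $Y$, so $\exp(-\bar{A}_t)$ has finite moments of every order and integrates against the bounded function $\lambda$ and the exponential-growth function $f$ (using that $Y_t$ has exponential moments under Assumption \ref{dasm}). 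I therefore expect no substantive obstacle beyond this bookkeeping: the mathematical content of the proposition has been front-loaded into Conditions \ref{cdn_blackbox} and \ref{cdn_var}, whose verification for a concrete black-box (the parametrix estimator) is carried out in Section 4.
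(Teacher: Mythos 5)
Your argument is correct and is essentially the paper's own: the paper likewise treats the proposition as a direct application of Condition \ref{cdn_blackbox} to the objective $g(z)=\exp(-a)\tilde f(y)$ (with $\tilde f=f$ for $L_2$ and $\tilde f=\lambda$, bounded hence of exponential growth, for $L_1$), after pulling out the deterministic factor involving $\lambda(Y_0)=\lambda(x)$, with integrability supplied by Condition \ref{cdn_var}. No gap.
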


The estimators defined in the above proposition yield the theorem for our final jump-diffusion estimator.

\begin{theorem} \label{theorem1_recursive}
Let $X_T$ be the process defined in \eqref{s1_JD_sde}, $Z^\pi$ be the Euler process of the black-box estimator satisfying Condition \ref{cdn_blackbox} and \ref{cdn_var}, and
$\xi_1$ be an independent exponential random variable with rate $\lambda(x)$. Define $U(x, T)$ by the following recursive equation
\begin{equation} \label{recursive}
\begin{aligned}
U(x, T) &= 1_{\{\xi_1\geq T\}}\Xi_2(x, T, Z^{\pi}) + 1_{\{\xi_1< T\}}\Xi_1(x, \xi_1, Z^{\pi})U(Y_{\xi_1}^{\pi} + h(Y_{\xi_1}^\pi, R_1), T-\xi_1),
\end{aligned}
\end{equation}
where
\begin{equation}\label{eqn_xi}
\begin{aligned}
\Xi_1(x, \xi_1, Z^{\pi}) &= \exp\left({-\sigma_A^2\xi_1/2}\right)L_1^{\Theta}(Z^{\pi})_{\xi_1}\\
\Xi_2(x, T, Z^{\pi}) &= \exp\left({-\sigma_A^2T/2}\right)L_2^{\Theta}(Z^{\pi})_{T}f(Y_T^{\pi}).
\end{aligned}
\end{equation}
Then, under Assumption \ref{dasm}-\ref{jasm} and Condition \ref{cdn_blackbox}-\ref{cdn_var}, $U(x, T)$ is an unbiased estimator for the functional of $X_T$ such that $\mathbb{E}[U(x, T)] = \mathbb{E}_{x}[f(X_T)]$. Moreover, $U(x, T)$ has finite variance.
\end{theorem}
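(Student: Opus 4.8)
\noindent The plan is to characterise both the target $u(x,T):=\mathbb{E}_x[f(X_T)]$ and the estimator mean $\tilde u(x,T):=\mathbb{E}[U(x,T)]$ as solutions of one and the same first-jump renewal equation and then invoke uniqueness of its solution, while finite variance will be obtained separately by a Gronwall/renewal-inequality argument run on truncated estimators. Throughout, the bounded intensity of Assumption \ref{jasm} makes the jump count of $X$ on $[0,T]$ dominated by a Poisson process of rate $\lambda_2$, so the recursion \eqref{recursive} terminates a.s.\ and is well posed.

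\textbf{Step 1 (renewal equation for the target).} Conditioning on the first jump time $T_1$ of $X$ and using the pathwise construction \eqref{construct} together with the strong Markov property of $X$ at $T_1$, I would derive
\begin{equation*}
u(x,T)=\mathbb{E}_x\big[e^{-A_T}f(Y_T)\big]+\mathbb{E}_x\Big[\int_0^T\lambda(Y_t)\,e^{-A_t}\,\mathbb{E}_{R\sim\nu}\big[u\big(Y_t+h(Y_t,R),\,T-t\big)\big]\,dt\Big],
\end{equation*}
where $Y$ solves \eqref{ysde} from $x$ and $A_t=\int_0^t\lambda(Y_s)\,ds$ as in \eqref{ycomp}; indeed the conditional survival of $T_1$ given the path of $Y$ is $e^{-A_t}$, on $\{T_1>T\}$ one has $X_T=Y_T$, and on $\{T_1=t\le T\}$ one has $X_{T_1}=Y_t+h(Y_t,R_1)$. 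A standard moment estimate --- available because $\mu$, $\sigma\sigma^\top$, $h$ and $\lambda$ are bounded (Assumptions \ref{dasm}--\ref{jasm}) --- gives $\mathbb{E}_x[e^{c\|X_T\|_1}]\le e^{c\|x\|_1+\kappa(c,T)}$, so $u(\cdot,T)$ has exponential growth. Iterating the displayed equation over the $n$-simplex $\{0<t_1<\dots<t_n<T\}$, whose volume is $T^n/n!$, shows the equation has a \emph{unique} solution in the class of functions with exponential growth in $x$ and locally (in $T$) bounded growth prefactor.

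\textbf{Step 2 (finite variance --- the main obstacle).} Let $U^{(k)}$ denote the estimator obtained by truncating \eqref{recursive} after $k$ levels (tail set to $0$), so that $U^{(k)}\to U$ a.s. Conditioning \eqref{recursive}--\eqref{eqn_xi} on $\xi_1$, using that $L_1^\Theta(Z^\pi)_{\xi_1}=L_2^\Theta(Z^\pi)_{\xi_1}\,\lambda(Y^\pi_{\xi_1})/\lambda(x)$ with $\lambda_1\le\lambda\le\lambda_2$, the bound $\|Y^\pi_t+h(Y^\pi_t,R)\|_1\le\|Y^\pi_t\|_1+dv$, and the independence of the fresh Euler process and exponential clock in the recursive call from $(Z^\pi,\xi_1,R_1)$, the second moments $V^{(k)}(x,s):=\mathbb{E}[U^{(k)}(x,s)^2]$ obey a renewal inequality $V^{(k)}(x,s)\le C_1(s)e^{2c_1\|x\|_1}+K\,e^{2c_1\|x\|_1}\int_0^s e^{2t\lambda_2}M_t\big(\sup_y e^{-2c_1\|y\|_1}V^{(k-1)}(y,s-t)\big)\,dt$, where the inhomogeneous term and the kernel are produced by Condition \ref{cdn_var} applied with the exponential-growth test functions $f$ and $y\mapsto e^{c_1\|y\|_1}$, the bounded factors $e^{t\lambda(x)}$, $e^{-\sigma_A^2 t/2}$, $\lambda(Y^\pi_t)^2/\lambda(x)^2$ being absorbed. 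Inducting on $k$ and closing this inequality by Gronwall (using that $C_1$ and $t\mapsto e^{2t\lambda_2}M_t$ are bounded on $[0,T]$) yields $\sup_k V^{(k)}(x,T)\le C(T)e^{2c_1\|x\|_1}$; Fatou then gives $\mathbb{E}[U(x,T)^2]\le C(T)e^{2c_1\|x\|_1}<\infty$, uniform integrability of $\{U^{(k)}\}$ gives $\mathbb{E}[U^{(k)}(x,T)]\to\mathbb{E}[U(x,T)]$, and in particular $\tilde u(\cdot,T)$ has exponential growth with constant $c_1$. I expect this step to be the crux: one must make the joint independence of Euler processes and exponential clocks across recursion levels precise so the conditioning collapses, verify Condition \ref{cdn_var} still applies after substituting the auxiliary exponential-growth test functions, and close the inequality \emph{uniformly in} $k$.

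\textbf{Step 3 (unbiasedness).} Conditioning \eqref{recursive} on $\xi_1\sim\mathrm{Exp}(\lambda(x))$ --- rigorously, passing to the limit in the analogous identity for $U^{(k)}$ using the exponential-growth bound of Step 2 and dominated convergence --- one finds that $\tilde u$ satisfies an equation of the same shape as in Step 1 with $\Xi_1,\Xi_2$ in place of the diffusion quantities. For the leading term, $\Xi_2(x,T,Z^\pi)=e^{-\sigma_A^2 T/2}L_2^\Theta(Z^\pi)_Tf(Y^\pi_T)$ and Proposition \ref{prop_two_estimators} (via Condition \ref{cdn_blackbox}) give $\mathbb{E}[\Xi_2(x,T,Z^\pi)]=e^{-\sigma_A^2 T/2}\mathbb{E}_x[L_2(Z)_Tf(Y_T)]=e^{T\lambda(x)}\mathbb{E}_x[e^{-A_T}f(Y_T)]$, the last step using $\bar A_T=A_T+\sigma_A\bar W_T$ from \eqref{eqn_sde_coupled} with $\bar W$ independent of $W$ and $\mathbb{E}[e^{-\sigma_A\bar W_T}]=e^{\sigma_A^2 T/2}$; hence $e^{-\lambda(x)T}\mathbb{E}[\Xi_2(x,T,Z^\pi)]=\mathbb{E}_x[e^{-A_T}f(Y_T)]$. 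For the integrand, write $\Xi_1(x,t,Z^\pi)\,\mathbb{E}_R[\tilde u(\cdot)]=e^{-\sigma_A^2 t/2}L_2^\Theta(Z^\pi)_t\,\psi_t(Y^\pi_t)$ with $\psi_t(y):=\lambda(y)\lambda(x)^{-1}\mathbb{E}_R[\tilde u(y+h(y,R),T-t)]$, which has exponential growth (constant $c_1$) by Step 2 and Assumption \ref{jasm}; applying Condition \ref{cdn_blackbox} to $g(z)=e^{-a}\psi_t(y)$ on horizon $t$ and taking the same $\bar W_t$-expectation turns the integral term, after Fubini, into $\mathbb{E}_x\big[\int_0^T\lambda(Y_t)e^{-A_t}\mathbb{E}_R[\tilde u(Y_t+h(Y_t,R),T-t)]\,dt\big]$. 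Thus $\tilde u$ solves exactly the renewal equation of Step 1, and by the uniqueness established there, $\tilde u=u$, i.e.\ $\mathbb{E}[U(x,T)]=\mathbb{E}_x[f(X_T)]$, which together with Step 2 proves the theorem.
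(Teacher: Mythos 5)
Your proposal is correct in its essentials, but it follows a genuinely different route from the paper. For unbiasedness, the paper changes measure to $\mathbb{Q}$ via the Radon--Nikodym density $L(X)$ (so that inter-arrivals become exponential), and then proves $\mathbb{E}_x[U(x,T)1_{A^\xi_{n,T}}]=\mathbb{E}_x[f(X_T)1_{A_{n,T}}]$ by induction on the jump-count events, finishing with dominated convergence; you instead characterize both $\mathbb{E}_x[f(X_T)]$ and $\mathbb{E}[U(x,T)]$ as solutions of the same first-jump Volterra/renewal equation and invoke uniqueness in the exponential-growth class, the compensating factors $e^{t\lambda(x)}$ and $e^{-\sigma_A^2 t/2}$ cancelling exactly as in the paper's computation but without ever introducing $\mathbb{Q}$ explicitly. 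For the variance, the paper writes $U$ as a product over the exponential clocks, disentangles the dependence between the jump count and the Euler segments by thinning against a rate-$\lambda_2$ process, conditions on the dominating count $N_T^+$, and bounds each factor recursively (its Lemma A.3) before summing against Poisson weights; you instead truncate the recursion at depth $k$, derive a renewal inequality for the weighted second moments $\sup_y e^{-2c_1\|y\|_1}V^{(k)}(y,s)$, close it by a Gronwall iteration uniformly in $k$, and pass to the limit by Fatou. Both variance arguments hinge on the same ingredients (disjointness of the indicator events, Condition 2 applied to $f$ and to the auxiliary test function $e^{c_1\|y\|_1}$, the bounds $\lambda_1\le\lambda\le\lambda_2$ and $\|h\|_\infty\le v$); your route avoids the thinning construction but needs $t\mapsto M_t$ in Condition 2 to be locally bounded on $[0,T]$ so the kernel is integrable --- a mild regularity the paper also uses implicitly when it applies the single constant $M_T$ at the random sub-horizons $T-T^\xi_{N_T^\xi}$ and $\xi_{C(l)}$. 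The main items you would still need to write out in full are the uniqueness lemma for the renewal equation (the weighted-supremum iteration over the simplex, keeping the spatial growth rate fixed at $c_1$) and the measurability/independence bookkeeping across recursion levels that justifies collapsing the conditional expectation of the recursive call to $\tilde u$; neither is a conceptual obstacle, and your Step 3 cancellation $e^{-\lambda(x)T}\mathbb{E}[\Xi_2(x,T,Z^{\pi})]=\mathbb{E}_x[e^{-A_T}f(Y_T)]$ and the use of Condition 1 with the test function $\psi_t(y)=\lambda(y)\lambda(x)^{-1}\mathbb{E}_R[\tilde u(y+h(y,R),T-t)]$ are exactly right.
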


Theorem \ref{theorem1_recursive} states that as long as the black-box diffusion estimator meets Condition \ref{cdn_blackbox} and \ref{cdn_var}, we can extend this estimator to accommodate jumps. 

Notice that the term $\exp(-\sigma_A^2T/2)$ appears because in the definition of $L_1$ and $L_2$ \eqref{L_process}, the expectation is $\mathbb{E}[\exp(-\bar{A})] = \exp(\sigma_A^2T/2) \mathbb{E}\left[\exp\left(-\int\lambda(Y_s)ds\right)\right]$. However, as will be shown later, for unbiased estimation, the major quantity of interest is $\mathbb{E}\left[\exp\left(-\int\lambda(Y_s)ds\right)\right]$. Henceforth, we have to multiply $\exp(-\sigma_A^2T/2)$ in order to compensate the independent Gaussian noise $\sigma_A\bar{W}_T$.

The algorithm for our jump-diffusion estimator can be deduced directly from Theorem \ref{theorem1_recursive}. But firstly we need some notations. We define $T^{\xi}_i = \sum_{j=1}^i \xi_{j}$. Notice that $T^{\xi}_i$ could be interpreted as the $i$-th jump time in our estimator, where the jump times are sampled from exponential distributions. We also define $Z^{\pi, i} = (Y^{\pi, i} \bar{A}^{\pi, i})$, which could be interpreted as an approximation process of $Z_t$ for $t \in (0, T^\xi_{i} - T^\xi_{i-1})$. More specifically, for $t\in (0, T^\xi_{i} - T^\xi_{i-1})$,  $Y^{\pi, i}_t$ could be interpreted as the Euler approximation of the weak solution of $X$ in \eqref{construct} (for $t \in (T^\xi_{i-1}, T^\xi_{i})$), starting at the ending point of $Y^{\pi, i-1}$ plus the jump size, i.e. $$Y^{\pi, i}_0 = X_{T^{\xi}_{i-1}}^{\pi,i-1} = Y_{\xi_{i-1}}^{\pi,i-1} + h(Y_{\xi_{i-1}}^{\pi,i-1}, R_{i-1}).$$

For cleaner notation, we denote $V_i = h(Y_{\xi_{i}}^{i}, R_{i})$ and $V_i^{\pi} = h(Y_{\xi_{i}}^{\pi, i}, R_{i})$. Algorithm \ref{alg:unbiased} details the jump-diffusion scheme.

\begin{algorithm}[ht!]
\caption{Black-Box Jump-Diffusion Estimator}
\label{alg:unbiased}
\begin{algorithmic}[1]
 \State Choose a black-box estimator that has the correction functional and Euler approximation pair $(\Theta, Z^{\pi})$ satisfying Condition \ref{cdn_blackbox} and \ref{cdn_var}. 
 \State Initialize $M = 1$, $i = 1$. Sample exponential arrival times $T^\xi_1 = \xi_1$ with intensity $\lambda(x_0)$.
 \While{$T_i^{\xi} < T$}
\State Simulate $Z^{\pi,i} = (Y^{\pi,i}, \bar{A}^{\pi,i})$ with starting point $(x_{i-1}, 0)$. 
\State Simulate $V_i^{\pi} = h(Y_{\xi_{i}}^{\pi, i}, R_{i})$. 
\State Compute $L_1^{\Theta}(Z^{\pi,i})_{\xi_i}$, update $M \gets ML_1^{\Theta}(Z^{\pi,i})_{\xi_i}$, and compute $x_i = Y_{\xi_i}^{\pi,i} + V_i^{\pi}$.
\State Sample $\xi_{i+1}$ from exponential arrival time with intensity $\lambda(x_i)$, and update $i\gets i+1$.
 \EndWhile
\State Sample $Z^{\pi,i}$ with starting point $(x_{i-1},0)$. Compute $L_2^{\Theta}(Z^{\pi, i})_{T-T^{\xi}_{i-1}}$ return
$$\exp(-\sigma_A^2T/2)M L_2^{\Theta}(Z^{\pi,i})_{T - T_{i-1}^{\xi}}f\left(Y^{\pi,i}_{T-T_{i-1}^{\xi}}\right).$$
\end{algorithmic}
\end{algorithm}

\subsection{Estimator Derivation}
The main idea is to use change of measure and an iterative Monte-Carlo approach. One major difficulty for simulating jump-diffusion model is that the jump time is related to the intensity, which could be a function of the jump-diffusion process itself. We circumvent this by changing measure from $\mathbb{P}$ to $\mathbb{Q}$ wherein the intensity will be constant between jump-times. To do so, we define a {\rcll} process $L(X)$ by
\begin{equation*}
\begin{aligned}
L(X)_t = \exp\left(\int_0^t-\left(\lambda(X_s) - \lambda(X_{T_{N_s}})\right)ds\right)\prod_{n=1}^{N_t}\frac{\lambda(X_{T_n^-})}{\lambda(X_{T_{n-1}})}.
\end{aligned}
\end{equation*}
Theorem 3.1 in \citet{bias2021} guarantees the existence of $\mathbb{Q}$ via the Radon-Nikodym derivative $L(X)_T$ of $\mathbb{P}$ with respect to $\mathbb{Q}$. Under $\mathbb{Q}$, the jump time $T_{n+1} - T_{n}$ follows an exponential distribution of parameter $\lambda(X_{T_n})$. From the construction $\eqref{construct}$, we also know that the strong Markov property holds under measure $\mathbb{Q}$.

Then we start to write down $E_x[f(X_T)]$ in a form that leads to the estimator:
\begin{equation} \label{eqn_expansion}
\begin{aligned}
\mathbb{E}_{x}[f(X_T)] &= \mathbb{E}_{x}^{\mathbb{Q}}[L(X)_Tf(X_T)]\\ 
&= \mathbb{E}_{x}^{\mathbb{Q}}[L(X)_Tf(X_T)1_{\{T_1 < T\}}] + \mathbb{E}_{x}^{\mathbb{Q}}[L(X)_Tf(X_T)1_{\{T_1 \geq T\}}]\\
&= \mathbb{E}_{x}^{\mathbb{Q}}[\mathbb{E}_{x,\mathcal{F}_{T_1}}^{\mathbb{Q}}[L(X)_{T_1}\frac{L(X)_{T}}{L(X)_{T_1}}f(X_T)1_{\{T_1 < T\}}]] + \mathbb{E}_{x}^{\mathbb{Q}}[L(X)_Tf(X_T)1_{\{T_1 \geq T\}}]\\
&= \mathbb{E}_{x}^{\mathbb{Q}}[1_{\{T_1 < T\}}L(X)_{T_1}\mathbb{E}_{X_{T_1}}[f(X_T)]] + \mathbb{E}_{x}^{\mathbb{Q}}[L(X)_Tf(X_T)1_{\{T_1 \geq T\}}]\\
& = \mathbb{E}_{x}^{\mathbb{Q}}[1_{\{T_1 < T\}}L(X)_{T_1}\mathbb{E}_{X_{T_1}}[f(X_T)] + 1_{\{T_1 \geq T\}}L(X)_Tf(X_T)].
\end{aligned}
\end{equation}
Let $\{{\xi_i}\}_{i=1}^{\infty}$ be an sequence of exponential random variables with rate $\{\lambda(X_{T_{i-1}})\}_{i=1}^{\infty}$. From the change of measure argument we know that $T_n = \sum_{i=1}^n\xi_i$ a.s. under measure $\mathbb{Q}$. From \eqref{eqn_expansion} we know that if we come up with estimators for $\mathbb{E}^{\mathbb{Q}}\left[1_{\{T_1 < T\}}L(X)_{T_1}\mathbb{E}_{X_{T_1}}[f(X_T)]\right]$ and  $\mathbb{E}^{\mathbb{Q}}\left[1_{\{T_1 \geq T\}}L(X)_Tf(X_T)\right]$, an unbiased estimator for $\mathbb{E}_{x}[f(X_T)]$ could be derived.

We start by deriving an estimator for $\mathbb{E}^{\mathbb{Q}}\left[1_{\{T_1 \geq T\}}L(X)_Tf(X_T)\right]$. Firstly, under measure $\mathbb{Q}$ and event ${\{T_1 \geq T\}}$, the law of the Brownian Motion driving $X$ and $Y$ in $\eqref{construct}$ is unchanged. Hence we have
\begin{equation*}
\begin{aligned}
\mathbb{E}_{x}^{\mathbb{Q}}[1_{\{T_1  \geq T\}}L(X)_Tf(X_T)] &= \mathbb{E}_{x}[1_{\{\xi_1  \geq T\}}L(X)_Tf(X_T)]\\
&= \mathbb{E}_{x}\left[1_{\{\xi_1  \geq T\}}\exp\left(-\int_0^T\lambda(X_s)ds + \lambda(x)T\right)f(X_T)\right]\\
&= \mathbb{E}_{x}\left[1_{\{\xi_1  \geq T\}}\exp\left(-\int_0^T\lambda(Y_s)ds + \lambda(x)T\right)f(Y_T)\right].
\end{aligned}
\end{equation*}
Recall the construction of the auxiliary process $Z = (Y, \bar{A})$ in $\eqref{eqn_sde_coupled}$ and the $L_2(Z)$ process in $\eqref{L_process}$, one can conclude that 

\begin{equation*}
\begin{aligned}
\mathbb{E}_{x}^{\mathbb{Q}}\left[1_{\{T_1  \geq T\}}L(X)_Tf(X_T)\right] &= e^{-\sigma_A^2T/2}\mathbb{E}_{x}\left[1_{\{\xi_1  \geq T\}}\exp\left(-\bar{A}_T + \lambda(x)T\right)f(Y_T)\right]\\
&= e^{-\sigma_A^2T/2}\mathbb{E}_{x}\left[1_{\{\xi_1  \geq T\}}L_2(Z)_Tf(Y_T)\right].\\
\end{aligned}
\end{equation*}

From Proposition \ref{prop_two_estimators}, there exists an estimator $L_2^{\Theta}(Z^{\pi})_{T}f(Y_T^\pi)$ such that $$\mathbb{E}[L_2^{\Theta}(Z^{\pi})_{T}f(Y_T^\pi)] = \mathbb{E}_{x}[L_2(Z)_Tf(Y_T)].$$ Since $\xi_1$ is exponential distributed random variable with rate $\lambda(x)$ and is (conditionally) independent of $Z$, we can prove that the estimator $1_{\{\xi_1  \geq T\}}\Xi_2(x, T, Z^{\pi})$ defined in \eqref{eqn_xi} will be an unbiased estimator for $\mathbb{E}_{x}^{\mathbb{Q}}[1_{\{T_1 \geq T\}}L(X)_Tf(X_T)]$.

Then we move to the first term inside the expectation of $\eqref{eqn_expansion}$, denote $g(X_{T_1}) = \mathbb{E}_{X_{T_1}}[f(X_T)]$, we have
\begin{equation*}
\begin{aligned}
\mathbb{E}_x^{\mathbb{Q}}[1_{\{T_1 < T\}}L(X)_{T_1}\mathbb{E}_{X_{T_1}}[f(X_T)]] = \mathbb{E}_x^{\mathbb{Q}}[1_{\{T_1 < T\}}L(X)_{T_1}g(X_{T_1})].
\end{aligned}
\end{equation*}
Analogously, we can use the same technique above. Under the event $\{T_1 < T\}$, from $\eqref{construct}$, $\eqref{eqn_sde_coupled}$ and $\eqref{L_process}$ we know that
\begin{equation*}
\begin{aligned}
\mathbb{E}_{x}^{\mathbb{Q}}[1_{\{T_1 < T\}}L(X)_{T_1}g(X_{T_1})] &= \mathbb{E}_{x}[1_{\{\xi_1 < T\}}L(X)_{\xi_1}g(X_{\xi_1})]\\
&= \mathbb{E}_{x}\left[1_{\{\xi_1 < T\}}\exp\left(-\int_0^{\xi_1}\lambda(X_s)ds + \lambda(x)\xi_1\right)\cfrac{\lambda(X_{\xi_1^-})}{\lambda(x)}g(X_{\xi_1})\right]\\
&= \mathbb{E}_{x}\left[1_{\{\xi_1 < T\}}\exp\left(-\int_0^{\xi_1}\lambda(Y_s)ds + \lambda(x)\xi_1\right)\cfrac{\lambda(Y_{\xi_1})}{\lambda(x)}g(Y_{\xi_1} + V_1)\right]\\
&= e^{-\sigma_A^2\xi_1/2}\mathbb{E}_{x}\left[1_{\{\xi_1 < T\}}\exp\left(-\bar{A}_{\xi_1} + \lambda(x)\xi_1\right)\cfrac{\lambda(Y_{\xi_1})}{\lambda(x)}g(Y_{\xi_1}+V_1)\right]\\
&= e^{-\sigma_A^2\xi_1/2}\mathbb{E}_{x}\left[1_{\{\xi_1 < T\}}L_1(Z)_{\xi_1}g(Y_{\xi_1}+V_1)\right].\\
\end{aligned}
\end{equation*}

 From Condition \ref{cdn_blackbox} again, if we know the explicit form of $g$, an unbiased estimator of $\mathbb{E}_{x}\left[1_{\{\xi_1 < T\}}L_1(Z)_{\xi_1}g(Y_{\xi_1}+V_1)\right]$ would be of the form $1_{\{\xi_1 < T\}}L_1^{\Theta}(Z^{\pi})_{\xi_1}g(Y_{\xi_1}^{\pi} + V_1^{\pi})$, where we have $V_1^{\pi}$ because by definition $V_1$ could also be a function of $Y_{\xi_1}^{\pi}$. However, we do not know $g(x)$ and it has to be estimated by generating an unbiased estimator again. This leads to a recursive formulation of the estimator in Theorem \ref{theorem1_recursive}. For the details of the proof we refer the readers to the appendix.

\section{Application with Black-Box Algorithms}\label{sec_parametrix}
In this section we give an example of the black-box algorithm that completes the estimator for the jump-diffusion model. We exemplify our approach on the  ``parametrix'' method developed by  \citet{ballyhiga2015} and  \citet{anderssonhiga2017}. We note that the estimator proposed by \citet{wagner1989} also satisfies Condition \ref{cdn_blackbox} and \ref{cdn_var}. The verification process developed below also applies to this estimator.

Under Assumption \ref{dasm}, for a diffusion model $Y$ satisfying $\eqref{ysde}$, we have
\begin{equation}\label{eqn_unbiased_theta_1}
\begin{aligned}
\mathbb{E}[f(Y_T)] = \mathbb{E}[f(Y_T^{\pi})\Theta_1(Y^{\pi}, T)],
\end{aligned}
\end{equation}
where $f : \mathbb{R}^d \to \mathbb{R}$ is a bounded function, and
\begin{equation}\label{eqn_form_theta_1}
\begin{aligned}
\Theta_1(Y^{\pi}, T) = \frac{1}{\Psi(T-\tau_{N_T})}
  \prod_{k=1}^{N_T}
  \frac{\vartheta_{\tau_k - \tau_{k-1}}
  (Y_{\tau_{k-1}}^{\pi}, Y_{\tau_k}^{\pi})}
  {\psi(\tau_k -\tau_{k-1})},
\end{aligned}
\end{equation}
where $\tau_i$ is the arrival times of the counting process $N$, $\psi$ is the density that we sample $\{\tau_k-\tau_{k-1}\}_{k=1}^\infty$, and $\Psi (t)$ is the survival function
$\Psi (t) = P(\tau_1 > t) = \int_t^\infty \psi(s)ds$. Moreover,
\begin{equation}\label{theta}
\begin{aligned}
\vartheta_t(y_1, y_2) 
&= \frac{1}{2}\sum_{i,j} \vartheta_t^{i,j}(y_1, y_2) 
- \sum_i \rho_t^i(y_1, y_2)\\
\vartheta_t^{i,j}(y_1, y_2) &= \partial_{i,j}^2a^{i,j}(y_2) +
\partial_ja^{i,j}(y_2)h_t^i(y_1, y_2)\\
& +\partial_i a^{i,j}(y_2)h_t^j(y_1, y_2)
 +(a^{i,j}(y_2)-a^{i,j}(y_1))h_t^{i,j}(y_1, y_2)\\
\rho_t^{i}(y_1, y_2) &= \partial_i\mu^i(y_2) + (\mu^i(y_2) - \mu^i(y_1))h_t^i(y_1, y_2)\\
h_t(y_1, y_2) &= H_{ta(y_1)}(y_2-y_1- t \mu(y_1) ),\\
\end{aligned}
\end{equation}
where $H$ denotes the Hermite polynomials. For any matrix
$M$ we have $1$st-order polynomials defined by $H^i_{M}(x)
= -(M^{-1} x)_i$ and $2$nd-order polynomials define by
$H^{ij}(x) = (M^{-1}x)_i(M^{-1}x)_j -(M^{-1})_{ij}$.

The choice
of a Poisson process for $N$ in \eqref{eqn_form_theta_1} leads to infinite variance. Choices that lead to 
a finite variance are discussed in \citet{anderssonhiga2017}.
One example includes Beta distributed interarrivals on 
$[0,T+\epsilon]$ for a $\epsilon >0$, where $\tau_{i+1} - \tau_i$ become a sequence of i.i.d random variable with density function $f(x) = (1 - \gamma)/(x^{\gamma}(T+\epsilon)^{1-\gamma})$ in the range $[0,T+\epsilon]$. More specifically, in the Beta distributed case we have
\begin{equation*}
\begin{aligned}
\Theta_1(Y^{\pi}, T) = \cfrac{1}{p_{N_T}(\tau_1, \cdots ,\tau_{N_T})}\prod_{j=0}^{N_T-1}\vartheta_{\tau_{j+1}-\tau_j}(Y_{\tau_j}^{\pi}, Y_{\tau_{j+1}}^{\pi}),
\end{aligned}
\end{equation*}
where 
\begin{equation*}
\begin{aligned}	
p_n(s_1, \cdots , s_n) = \left(1-\left(\cfrac{T-s_n}{T+\epsilon}\right)^{1-\gamma}\right)\left(\frac{1-\gamma}{(T+\epsilon)^{1-\gamma}}\right)^n \prod_{i=0}^{n-1}\cfrac{1}{(s_{i+1}-s_i)^{\gamma}}.
\end{aligned}
\end{equation*}

To apply the above estimator to jump-diffusion process, we have to make the notation compatible for the augmented process $Z = (Y, \bar{A}) \in \mathbb{R}^{d+1}$. With the extra dimension, we make slight modifications to $\Theta_1$ defined in \eqref{eqn_form_theta_1}. We define $\Theta_2$ such that
\begin{equation}\label{eqn_form_theta_2}
\begin{aligned}
\Theta_2(Z^{\pi}, T) = \frac{1}{\Psi(T-\tau_{N_T})}
  \prod_{k=1}^{N_T}
  \frac{\theta_{\tau_k - \tau_{k-1}}
  (Z_{\tau_{k-1}}^{\pi}, Z_{\tau_k}^{\pi})}
  {\psi(\tau_k -\tau_{k-1})},
\end{aligned}
\end{equation}
where for $z_1 = (y_1,\bar{a}_1) \in \mathbb{R}^{d+1}$  
and $z_2 = (y_2,\bar{a}_2) \in \mathbb{R}^{d+1}$ we have
\begin{equation}\label{eqn:def_theta}
    \begin{aligned}
    \theta_t(z_1, z_2) = \vartheta_t(y_1, y_2) +
(\lambda(y_2) - \lambda(y_1))
\Big( \frac{\bar{a}_2 - \bar{a}_1 - \lambda(y_1)t}{t\sigma_A^2} \Big),
    \end{aligned}
\end{equation}
which is derived by going through general formulas in \eqref{theta} for the extra dimension.

Next, we need to fix the regularity issue stated in Condition \ref{cdn_blackbox} and \ref{cdn_var}. \citet{anderssonhiga2017} establish the finite variance result for bounded function, and it is not directly applicable to our case. As stated in Proposition \ref{prop_two_estimators}, we have to estimate $L_1(Z)_T$ and $L_2(Z)_Tf(Y_T)$, which involve exponential growth functions. Therefore, we have to make sure the estimator with correction functional $\Theta_2$ defined in \eqref{eqn_form_theta_2} satisfies Condition \ref{cdn_blackbox} and \ref{cdn_var}.

We verify this via two lemmas below. Denote $S_n$ to be the space of $(s_1,\cdots,s_n)$ such that $0 = s_0 < s_1 < \cdots < s_n < T$, and $\gamma$ to be the parameter of the Beta distribution for sampling $\tau_i$'s.

\begin{lemma}\label{lemma_para_var}
Under Assumption \ref{dasm}-\ref{jasm}, let $Z^\pi = (Y^\pi, \bar{A}^\pi)$ be the Euler process in \eqref{eqn_form_theta_2} with starting point $(x, 0)$. For $n \in \mathbb{N}$, $T>0$, and $f: \mathbb{R}^d \to \mathbb{R}$ such that 
$|f(\bm{y})| \leq e^{c_1||\bm{y}||_1 + c_2}$, there exist $M(T, p)$ and $C_T$ such that
\begin{equation*}
\begin{aligned}	
\mathbb{E}\left[\left|e^{-\bar{A}_T^{\pi}}e^{c_1||Y_T^\pi||_1 + c_2}\Theta_{2}(Z^\pi, T)\right|^p\right]\leq M(T, p)\prod_{i=1}^d \cosh(c_1px_{0}^{(i)}),
\end{aligned}
\end{equation*}
where $$\bar{M}(T,p) :=  C_T\exp(((2c_1^2a_2d+\sigma_A^2)p^2+\lambda_2p)T),$$
$$M(T,p) :=
\bar{M}(T,p)\int_{S^n}\frac{1}{p_n(s_1, \cdots , s_n)^{p-1}}\prod_{j=1}^{n}(s_j - s_{j-1})^{-p/2}ds.$$
\end{lemma}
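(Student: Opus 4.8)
The plan is to bound the $p$-th moment by first taking absolute values inside the expectation, then separating the three ingredients — the exponential weight $e^{-\bar A^\pi_T}$, the exponential-growth payoff $e^{c_1\|Y^\pi_T\|_1+c_2}$, and the parametrix correction functional $\Theta_2$ — and controlling each by conditioning appropriately on the sampled interarrival times $(\tau_1,\dots,\tau_{N_T})$ and on the Euler skeleton. First I would condition on $N_T = n$ and on the sampled jump times $(s_1,\dots,s_n)\in S_n$; given these, the Euler process $Z^\pi$ is a Gaussian Markov chain with bounded drift and uniformly elliptic covariance (Assumptions \ref{dasm}–\ref{jasm}). The factor $1/p_n(s_1,\dots,s_n)$ in $\Theta_2$ is deterministic given the times, which is where the $p_n^{-(p-1)}$ and the $(s_j-s_{j-1})^{-p/2}$ terms in $M(T,p)$ come from — the $(s_j-s_{j-1})^{-p/2}$ factors arise from the $L^p$-norms of the Hermite-polynomial terms $h_t(y_1,y_2)$ and of the extra coordinate increment $(\bar a_2-\bar a_1-\lambda(y_1)t)/(t\sigma_A^2)$ in $\theta_t$, each of which scales like (time increment)$^{-1/2}$ times a standard Gaussian.

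Second, for the remaining expectation over the Gaussian skeleton I would use a Cameron–Martin / Gaussian moment-generating-function computation. The key observation is that $-\bar A^\pi_T + c_1\|Y^\pi_T\|_1$ is (after bounding $\|\cdot\|_1$ by a sum of signed coordinates and using $|f|\le e^{c_1\|y\|_1+c_2}$, which forces us to take both signs and hence produces the $\cosh(c_1 p x_0^{(i)})$ factors and the product over coordinates $i=1,\dots,d$) an affine-plus-drift functional of Gaussian increments. The increments of $\bar A^\pi$ contribute the variance term $\sigma_A^2$, the increments of $Y^\pi$ in direction $i$ contribute variance bounded by $a_2$ per unit time in each of $d$ coordinates, and collecting the quadratic exponent over the whole interval $[0,T]$ yields the $\exp\big((2c_1^2 a_2 d + \sigma_A^2)p^2 T\big)$ contribution; the bounded-drift terms ($\mu$, $\lambda$) and the derivative/difference prefactors in $\vartheta_t$, $\rho_t$, $\theta_t$ are all uniformly bounded by Assumptions \ref{dasm}–\ref{jasm} and, after applying Cauchy–Schwarz/Hölder to split them off from the Gaussian part, contribute the $\exp(\lambda_2 p T)$ and an absolute constant $C_T$ (which also absorbs the $1/\Psi(T-\tau_{N_T})$ survival-function factor, bounded below on $[0,T]$).

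Third, I would reassemble: the conditional bound takes the form $\bar M(T,p)\,\prod_i\cosh(c_1 p x_0^{(i)})\cdot p_n(s_1,\dots,s_n)^{-(p-1)}\prod_j (s_j-s_{j-1})^{-p/2}$ — the exponent $p-1$ appearing because one power of $1/p_n$ is the genuine importance-sampling weight that integrates against the law of the times and the other $p-1$ powers are deterministic overhead — and then integrating over $(s_1,\dots,s_n)\in S_n$ against the sampling density (and summing the geometric-type series in $n$, using that $\lambda$ is bounded so $N_T$ has exponential tails) produces exactly $M(T,p)$. The main obstacle will be the third step's integrability: verifying that $\int_{S_n} p_n^{-(p-1)}\prod_j(s_j-s_{j-1})^{-p/2}\,ds$ is finite, which requires the Beta parameter $\gamma$ to be chosen so that $\gamma$ together with the $p/2$ singularity stays integrable near the diagonal $s_j\uparrow s_{j+1}$ and near the endpoints — this is precisely the delicate choice of sampling density flagged after \eqref{eqn_form_theta_1}, and getting the exponents to line up (so that the product over $n$ intervals still sums over $n$) is the crux of the argument. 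A secondary technical point is justifying the interchange of conditional expectation and the splitting via Hölder so that the Gaussian MGF bound applies coordinate-wise with the stated constants.
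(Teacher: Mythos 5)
Your outline matches the paper's strategy at the top level (condition on the sampled times, Gaussian computations that propagate $\cosh$ factors back to the starting point, a per--increment singularity $(s_j-s_{j-1})^{-p/2}$, then integration over $S_n$ against $p_n$ with the Beta-sampling integrability condition), but there is a genuine gap in how you handle $\theta_t$. You claim the Hermite terms ``each scale like (time increment)$^{-1/2}$'' and that the coefficient-difference prefactors are ``uniformly bounded'' and can be split off by Cauchy--Schwarz/H\"older. This is false for the second-order terms: $h^{ij}_t(y_1,y_2)$ scales like $t^{-1}$, and the net $t^{-1/2}$ rate in $\vartheta_t^{ij}$ is obtained only by pairing $h^{ij}_t$ with the increment $a^{ij}(y_2)-a^{ij}(y_1)=O(|y_2-y_1|)=O(\sqrt{t})$ under the Gaussian kernel (this pairing is exactly what Lemma \ref{lemma_bound_thetaq}, via Corollary 4.2 of \citet{anderssonhiga2017}, encodes: $|\theta_t|^p q_t \le C_T t^{-p/2}\varphi_{4a_2t}\varphi_{2\sigma_A^2 t}$). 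If you discard the difference structure and treat those prefactors as bounded constants, the per-step singularity degrades to $t^{-p}$, and then $\int_{S_n}p_n^{-(p-1)}\prod_j(s_j-s_{j-1})^{-p}\,ds$ diverges already for $p=1$, so the argument collapses at precisely the step you identify as the crux.

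A second, related issue is the combination step. The paper never H\"older-splits the terminal weight $e^{-p\bar A^\pi_T+c_1p\|Y^\pi_T\|_1}$ from the product of $\theta$-factors; it integrates the chain backward one slice at a time, using that $e^{-pz}\prod_{i}\cosh(c_1p\,y^{(i)})$ is reproduced under the Gaussian transition kernels up to factors $e^{\sigma_A^2p^2\xi_j/2}$ and $e^{2c_1^2p^2a_2d\,\xi_j}$; this is what couples the terminal MGF to the per-step $\theta$-bounds through the same increments and yields the stated constant $\bar M(T,p)$. A H\"older split could at best give a bound of the right general shape with inflated exponents, but your proposal does not explain how the joint dependence across the chain is controlled without such a recursion. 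Finally, a minor point: the summability over $n$ has nothing to do with exponential tails of $N_T$ from the bound on $\lambda$ (here $N$ is the parametrix time grid with Beta interarrivals); it comes from the choice of $p_n$ and Proposition 7.3 of \citet{anderssonhiga2017}, i.e.\ from $p(1/2-\gamma)<1-\gamma$.
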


Lemma \ref{lemma_para_var} characterize the moment condition for the functional we are interested in. The first moment $p=1$ being finite ensures the existence of expectation, and the second moment $p=2$ being finite ensures the finite variance property. From Proposition 7.3 in \citet{anderssonhiga2017} and the lemma above, we know that the $p$-th moment will be finite if $p(1/2 - \gamma) < 1-\gamma$. For example, a safe choice ensuring the first and second moment being finite is to choose  $\gamma \in (0,1/2)$. Therefore, by recalling that $\cosh(x) = (e^x + e^{-x})/2$, we have $\prod_{i=1}^d \cosh(c_1px_{0}^{(i)}) \leq \exp\left(c_1p||x_0||_1\right)$, thereby verifying Condition \ref{cdn_var}. The reason we use hyperbolic function is that it  features better analytical trackability and will be used in the later proof.

We then state the unbiasedness result for our estimator, which will ensure Condition \ref{cdn_blackbox}.
\begin{lemma} \label{lem_para_expectation}
Under Assumption \ref{dasm}-\ref{jasm}, let $Z_t = (Y_t, \bar{A}_t)$ as defined as \eqref{eqn_sde_coupled}, and $Z^\pi = (Y^\pi, \bar{A}^\pi)$ be the Euler process in \eqref{eqn_form_theta_2} with starting point $(x, 0)$. If we sample the arrival time $\xi_i = \tau_i - \tau_{i-1}$ from Beta distribution with parameter $\gamma \in (0,1/2)$, for $f : \mathbb{R}^d \to \mathbb{R}$ with exponential growth, the following representation holds

\begin{equation*}
\begin{aligned}	
\mathbb{E}\left[e^{-\bar{A}_T} f(Y_T)\right] = \mathbb{E}\left[e^{-\bar{A}_T^{\pi}}f(Y_T^{\pi})\Theta_2(Z^\pi, T)\right].
\end{aligned}
\end{equation*}
\end{lemma}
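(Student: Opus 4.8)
The plan is to realize $\mathbb{E}[e^{-\bar{A}_T}f(Y_T)]$ as the time-$T$ value of the semigroup of the augmented diffusion $Z=(Y,\bar{A})$ applied to $g(z)=e^{-a}f(y)$, to expand that semigroup by the parametrix series built from frozen-coefficient Gaussian kernels, and then to resum the series as an expectation over the Euler skeleton $Z^{\pi}$ weighted by $\Theta_2$. The point that goes beyond \citet{anderssonhiga2017} is that $g$ is unbounded, so both the convergence of the series and its identification with the semigroup must be re-examined, and this is exactly where Lemma \ref{lemma_para_var} (with $p=1$) enters. First I would record that the generator of $Z$ in \eqref{eqn_sde_coupled} is $\mathcal{L}=\mathcal{L}_Y+\lambda(y)\partial_a+\tfrac12\sigma_A^2\partial_a^2$, whose diffusion matrix is block diagonal with the constant entry $\sigma_A^2$ in the extra coordinate; feeding this into the general weight \eqref{theta} annihilates every term containing an $a$-derivative of the (constant) $a$-block or of the drift component $\lambda(y)$, leaving precisely the single extra summand of \eqref{eqn:def_theta}. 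Thus $\theta_t$ is the correct parametrix weight for $Z$, and I would only display the handful of non-vanishing terms of this routine computation.

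Next I would write the parametrix expansion of $P_Tg(x,0)=\mathbb{E}_{(x,0)}[e^{-\bar{A}_T}f(Y_T)]$ as a sum over $n\ge0$ of iterated space–time convolutions of Gaussian kernels against $\theta$, and rewrite the $n$-th time-simplex integral as an expectation over the Beta-distributed interarrival times $(\tau_1<\cdots<\tau_n)$ with density $\psi$ and survival $\Psi$, conditioned on $N_T=n$. Summing over $n$ turns the series into $\mathbb{E}[e^{-\bar{A}_T^\pi}f(Y_T^\pi)\Theta_2(Z^\pi,T)]$ with $\Theta_2$ exactly as in \eqref{eqn_form_theta_2}. To make this resummation legitimate I would invoke Lemma \ref{lemma_para_var} with $p=1$: it bounds $\mathbb{E}\big[\,|e^{-\bar{A}_T^\pi}e^{c_1\|Y_T^\pi\|_1+c_2}\Theta_2(Z^\pi,T)|\,\big]$ by $M(T,1)\prod_i\cosh(c_1 x^{(i)})<\infty$ — finite because $\gamma\in(0,1/2)$ keeps the singular simplex integral $\int_{S_n}\prod_j(s_j-s_{j-1})^{-1/2}\,ds$ summable (cf.\ Prop.\ 7.3 of \citet{anderssonhiga2017}) — which furnishes absolute convergence of the series and justifies the Fubini interchange. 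If one prefers to reduce to an already-proven statement, the same bound underwrites dominated convergence along the bounded truncations $f_K=(-K)\vee(f\wedge K)\to f$, with the left-hand side controlled by the crude domination of $e^{-\bar{A}_T}$ by the exponential of minus the Gaussian part of $\bar{A}_T$ (since $\int_0^T\lambda(Y_s)\,ds\ge0$) together with the Gaussian tails of $Y_T$ under Assumption \ref{dasm}.

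The step I expect to be the main obstacle is identifying the resummed series with the true semigroup value $P_Tg(x,0)$ when $g(z)=e^{-a}f(y)$ is unbounded. I would handle it by showing that both the series and $z\mapsto\mathbb{E}_z[g(Z_T)]$ solve the Duhamel/Volterra integral equation attached to the splitting $\mathcal{L}=\widehat{\mathcal{L}}+(\mathcal{L}-\widehat{\mathcal{L}})$ of the generator into a frozen part and a remainder, and invoking uniqueness of that equation within the class of functions of at most linear-exponential growth; the uniform ellipticity and the $C^1_b$ bounds on $\mu,\sigma\sigma^T,\lambda$ from Assumptions \ref{dasm}–\ref{jasm} are what control the Gaussian kernels and their first and second derivatives so that the Duhamel iteration contracts on this class. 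An equivalent and perhaps more economical route is to run the identification for the bounded truncations $f_K$ — so the only unboundedness of $g_K$ is the harmless $e^{-a}$ factor, which integrates out exactly against the independent Gaussian $\sigma_A\bar{W}$ — obtain the representation for each $f_K$, and then let $K\to\infty$ using the $p=1$ bound of Lemma \ref{lemma_para_var} on the right and monotone/dominated convergence on the left. Either way, the substance of the lemma is the removal of boundedness: the estimates of Lemma \ref{lemma_para_var} do the analytic work, and what remains is the standard parametrix bookkeeping carried through with an exponentially growing target.
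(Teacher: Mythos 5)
Your proposal is sound, and in fact it contains two routes: your ``more economical'' fallback is essentially the paper's proof, while your primary route is genuinely different. The paper does not re-derive the parametrix series for the augmented process at all: it truncates \emph{both} factors, working with $e^{(-\bar{A}_T)\wedge K}f_K(Y_T)$ so that the target is fully bounded, cites the bounded-case identity (Lemma 3.1 of \citet{chen2019}) to get $\mathbb{E}[e^{(-\bar{A}_T)\wedge K}f_K(Y_T)]=\mathbb{E}[e^{(-\bar{A}_T^\pi)\wedge K}f_K(Y_T^\pi)\Theta_2(Z^\pi,T)]$, and then passes to the limit $K\to\infty$ by dominated convergence --- exactly your mechanism: Lemma \ref{lemma_para_var} with $p=1$ dominates the right-hand side, and a Gaussian-type bound (the paper cites density estimates of \citet{menozzi2021}; your independence argument using $\int_0^T\lambda(Y_s)ds\ge 0$ and the independent $\sigma_A\bar W$, plus exponential moments of $Y_T$, is an acceptable and arguably more elementary substitute) dominates the left-hand side. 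Your primary route --- expanding the semigroup of $Z$ into the parametrix series, resumming over the Beta-distributed skeleton via Fubini, and identifying the sum with $P_Tg$ through a Duhamel/Volterra uniqueness argument in an exponential-growth class --- buys self-containedness but at a real cost: the uniqueness step for unbounded $g$ is only asserted and would need a careful contraction argument with Gaussian kernel derivative estimates, work the paper deliberately avoids by reducing to the already-proven bounded case. One refinement to your fallback route: you keep $e^{-a}$ untruncated and claim the bounded-case result still applies because the Gaussian factor ``integrates out''; this is itself a statement requiring proof, and the cleaner move (as in the paper) is to truncate the exponent as well, so the cited lemma applies verbatim and all the unboundedness is handled in a single dominated-convergence step.
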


With  Condition \ref{cdn_blackbox} and \ref{cdn_var} verified from Lemma \ref{lemma_para_var} and \ref{lem_para_expectation}, we know that $\Theta_2$ is the right correction functional to accommodate jump-diffusions.  

\section{Numerical Experiments}

In this section we conduct numerical experiments. Firstly, we test the performance and robustness of our unbiased jump-diffusion estimator. We conduct the numerical experiment under two environments: the first one satisfying Assumption \ref{dasm}-\ref{jasm}, and the second one violates those assumptions. It turns out that our algorithm is unbiased under both environments.

Secondly, we show the efficiency of our algorithm compared to the standard Euler scheme. Due to the fact that we sample much fewer grid size than the standard Euler algorithm, our algorithm is much faster per trail. The down side is a higher variance, but from experiments we can see the faster speed outweighs the higher variance, thereby making the jump-diffusion estimator more efficient than the standard Euler. As shown in our results, the jump-diffusion estimator has a higher convergence rate and a significant improvement in efficiency compare to Euler.

Lastly, we discuss the effects of different parameters. Recall that we have $3$ parameters that need to be chosen by design: the diffusion multiplier $\sigma_A$ for the auxiliary process $\bar{A}$, $\gamma$ controlling the Beta distribution, and $\epsilon$ for sampling the grid size between jump times. We found that from our experiment, choosing appropriate $\sigma_A$ and $\epsilon$ is essential to the efficiency and accuracy of our algorithm.

\subsection{Performance on different models}

Firstly, we test our result in an environment that satisfy Assumption \ref{dasm} and \ref{jasm}, and we choose the naive model where all functions are bounded (except for $f_2$):
\begin{equation}\label{model_1}
    \begin{aligned}
    dX_t^{(1)} &= (\mu_1 - \mu_2\sin(X_t^{(1)}))dt + \sqrt{\sigma_1 + \sigma_2\sin(X_t^{(1)})} dW_t^{(1)}\\
    dX_t^{(2)} &= (\mu_1 - \mu_2\cos(X_t^{(2)}))dt + \sqrt{\sigma_1 + \sigma_2\sin(X_t^{(2)})} dW_t^{(2)}\\
\lambda(x) &= \lambda_1 + \lambda_2\sin(\lambda_3X_t^{(1)} + \lambda_4X_t^{(2)})\\ f_1(x) &= 1_{\{X_T^{(1)} + X_T^{(2)} > k\}} \,\,\,\,\, f_2(x) = (X_T^{(1)} + X_T^{(2)} - k)_+,
    \end{aligned}
\end{equation}
where $\mu_1 = 0.4$, $\mu_2 = 0.2$, $\sigma_1 = 1$, $\sigma_2 = 0.2$, $\lambda_1 = 0.3$, $\lambda_2 = \lambda_3 = \lambda_4 = 0.2$ and $k = 1.8$. The reason for choosing $f_1$ and $f_2$ is that they include a wide family of payoffs of practical interest. 

We test the ``parametrix'' jump-diffusion estimator described in Section \ref{sec_parametrix}, and compare it with the Euler
method developed in \cite{shkolnik2021}. For implementation of the Euler method, we choose the well-known allocation rule \cite{duffie1995} that is asymptotically efficient. A nearly exact expectation is computed with a very large number of Monte Carlo trials.

Tables 1 summarize major performance in our
experiments on estimating $\mathbb{E}[f_1(X_T)]$. We find both algorithm converges to the true expected value. Moreover, we compare the efficiency between two algorithms in Figure \ref{fig:efficiency}. An algorithm is more efficient if it generates smaller confidence interval given the same run time. We find that the parametrix jump-diffusion estimator is more efficient than the Euler approximation.

\begin{table}[htb]
\centering
\begin{tabular}{c|cccc|ccccc}
\toprule
\multicolumn{8}{ c }{Parametrix \hspace{1.8in} Euler} \\
\midrule
Type & $M$ & Error & Var & $99\%$ CI 
 & $M$ & $p$ & Error & Var & $99\%$ CI 
\\ \midrule
$f_1$ & $5\times10^4$ & 0.00166 & 2.48 &  0.018
	& $4\times 10^3$ &  $2\times10^2$ & 0.00659 &0.218 & 0.019   \\
$f_1$ & $5\times10^5$ & 0.0018 & 2.5 &  0.0057
	& $16\times 10^3$ & $4\times10^2$ & 0.003778 &0.219 & 0.009   \\
$f_1$ & $5\times10^6$ &  0.0002 & 2.48 &  0.0018
	& $64\times 10^3$ & $8\times10^2$ & 0.00153 &0.220 & 0.005  \\
	\midrule
$f_2$ & $5\times10^4$ & 0.0025 & 2.35 &  0.0187
	& $4\times 10^3$ &  $2\times10^2$ & 0.0041 & 0.21273 & 0.0188   \\
$f_2$ & $5\times10^5$ & 0.00063 & 2.47 &  0.0059
	& $16\times 10^3$ & $4\times10^2$ & 0.00298 & 0.21737 & 0.0094   \\
$f_2$ & $5\times10^6$ &  0.00088 & 2.67 &  0.0018
	& $64\times 10^3$ & $8\times10^2$ & 0.00115 & 0.21655& 0.005  \\
\bottomrule
\end{tabular}
\label{tbl:int}
\caption{Estimation of $\mathbb{E}[f_1(X_T)]$ and $\mathbb{E}[f_2(X_T)]$ for model $\eqref{model_1}$
with the parametrix and Euler methods.
\tq{Error} reports the absolute 
value between the (nearly) exact value and the 
Monte Calro estimate based on $M$ trials.
Normal confidence intervals (CI) accompany each estimate.
}
\end{table}

\begin{table}[htb]
\centering
\begin{tabular}{c|cccc|ccccc}
\toprule
\multicolumn{8}{ c }{Parametrix \hspace{1.8in} Euler} \\
\midrule
Type & $M$ & Error & Var & $99\%$ CI 
 & $M$ & $p$ & Error & Var & $99\%$ CI 
\\ \midrule
$f_1$ & $5\times10^4$ & 0.0175 & 3.65 & 0.0209
	& $4\times 10^3$ &  $2\times10^2$ & 0.0042 & 0.236 & 0.0198   \\
$f_1$ & $5\times10^5$ & 0.00159 & 3.35 & 0.0066
	& $16\times 10^3$ & $4\times10^2$ & 0.0086 & 0.233 & 0.0099   \\
$f_1$ & $5\times10^6$ &  0.001 & 3.36 & 0.0021
	& $64\times 10^3$ & $8\times10^2$ & 0.00069 & 0.235 & 0.0049 \\
	\midrule
$f_2$ & $5\times10^4$ & 0.001 & 3.90 &  0.0236
	& $4\times 10^3$ &  $2\times10^2$ & 0.0066 & 0.256 & 0.021   \\
$f_2$ & $5\times10^5$ & 0.0007 & 4.95 &  0.0075
	& $16\times 10^3$ & $4\times10^2$ &  0.0059 & 0.275 & 0.0104   \\
$f_2$ & $5\times10^6$ &  0.0005 & 4.24 &  0.0023
	& $64\times 10^3$ & $8\times10^2$ & 0.0015 & 0.269 & 0.005  \\
\bottomrule
\end{tabular}
\label{tbl:int1}
\caption{Estimation of $\mathbb{E}[f_1(X_T)]$ and $\mathbb{E}[f_2(X_T)]$ for model $\eqref{model_2}$
with the parametrix and Euler methods.
\tq{Error} reports the absolute 
value between the (nearly) exact value and the 
Monte Calro estimate based on $M$ trials.
Normal confidence intervals (CI) accompany each estimate.
}
\end{table}

Since the assumptions we put on our jump-diffusion estimator are quite restrictive, we apply it to the affine jump-diffusion model \cite{duffie2000} where all the coefficients in the model have an affine form
\begin{equation}\label{model_2}
    \begin{aligned}
    dX_t^{(1)} &= (\mu_1 - \mu_2X_t^{(1)})dt + \sqrt{\sigma_1 + \sigma_2X_t^{(1)}} dW_t^{(1)}\\
    dX_t^{(2)} &= (\mu_3 - \mu_4X_t^{(2)})dt + \sqrt{\sigma_1 + \sigma_2X_t^{(2)}} dW_t^{(2)}\\
\lambda(x) &= \lambda_1 + \lambda_2X_t^{(1)} + \lambda_3X_t^{(2)}\\ f_1(x) &= 1_{\{X_T^{(1)} + X_T^{(2)} > k\}} \,\,\,\,\, f_2(x) = (X_T^{(1)} + X_T^{(2)} - k)_+
    \end{aligned}
\end{equation}
with $\mu_1 = 0.6$, $\mu_2 = 0.1$, $\mu_3 = 0.5$, $\mu_4 = 0.2$, $\sigma_1 = 1$, $\sigma_2 = 0.2$, $\lambda_1 = 0.3$, $\lambda_2 = \lambda_3 = 0.04$ and $k = 1.8$. From table 2 and Figure \ref{fig:efficiency} we see the performance is still consistent with the previous result regardless of the fact that the condition has been violated.

\begin{figure}[h]
\centering
\begin{subfigure}{.4\textwidth}
  \centering
  \includegraphics[width=1\linewidth]{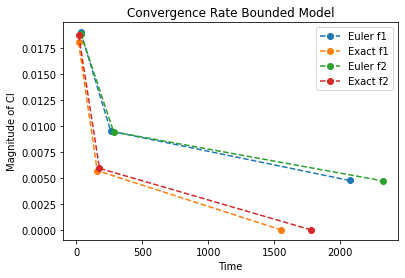}
  \caption{Results on model $\eqref{model_1}$}
  \label{fig:bdd}
\end{subfigure}
\begin{subfigure}{.4\textwidth}
  \centering
  \includegraphics[width=1\linewidth]{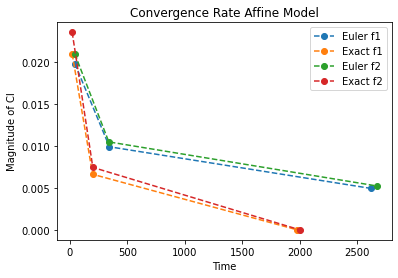}
  \caption{Results on model $\eqref{model_2}$}
  \label{fig:affine}
\end{subfigure}

\caption{
    An algorithm is more efficient if it can generate smaller magnitude of confidence interval given the same amount of time. We find that the more accurate we want the estimation be, the more efficient the parametrix method is compared to the Euler approximation. Moreover, such advantage in efficiency is consistent on different models even with assumption being violated.
}
\label{fig:efficiency}
\end{figure}

\subsection{Sensitivity on Parameters}
In this subsection we test on the behavior of the algorithm based on different parameters. More specifically, we are interested in the choice of $\sigma_A$, the diffusion parameter of the auxiliary process in $\eqref{eqn_sde_coupled}$, $\epsilon$, the parameter controlling the length of support for sampling the grid, and $\gamma$, the parameter for beta distribution. 

We test the result based on the model $\eqref{model_1}$ with payoff function $f_1$. The true value is generated by running the exact algorithm for $10^9$ times.

Generally speaking, the optimal parameter setup would be those having the lowest running time and smallest sample variance. This is because the sample variance directly affect the magnitude of our confidence interval.

For $\sigma_A$, from the description of our algorithm, it is clear that the change of $\sigma_A$ will not affect the running time. Therefore, we want to observe the effect of different $\sigma_A$ on the error and variance. By setting $\sigma_A$ to $0.01, 0.1, 0.5, 1$ and $5$, and running $2\times 10^6$ independent trails, we found that $\sigma_A = 0.01$ and $5$ will have very large error and variance, compared to those much smaller variance for $\sigma_A = 0.1$ $0.5$, and $1$. Please refer to Figure \ref{fig:sigma} for more details.

For $\gamma$, the results show that the running time for different $\gamma$ is roughly the same, henceforth we are not reporting the difference of the running time. As shown is Figure \ref{fig:gamma} There is a minor difference for the performance in terms of error and variance, but the difference is so small that one can view them as random noises generated by the algorithm. Our conclusion is that the performance of our algorithm is quite robust on different $\gamma$.

Lastly for $\epsilon$, since $\epsilon$ is controlling the magnitude of the support of the grid points, smaller $\epsilon$ will results in more grid points, thereby making the algorithm slower. Our experiment (Figure \ref{fig:eps2}) verify this finding. Moreover, we found varying the $\epsilon$ also changes the error and variance a lot. To make a consistent benchmark, we report our ``time'' in benchmark as the time required for the algorithm to have confidence interval of magnitude $10^{-4}$. We found that if $\epsilon < 0.5$, the error and variance will be very large, not to mention the massive running time. However, although setting $\epsilon$ larger will result in faster running time, we found that $\epsilon$ being too large will also affect the error. Therefore, as shown in Figure \ref{fig:eps1}, setting $\epsilon \in [0.5, 5]$ will result in good balance between accuracy and efficiency.

\begin{figure}[h]
\centering
\begin{subfigure}{.4\textwidth}
  \centering
  \includegraphics[width=1\linewidth]{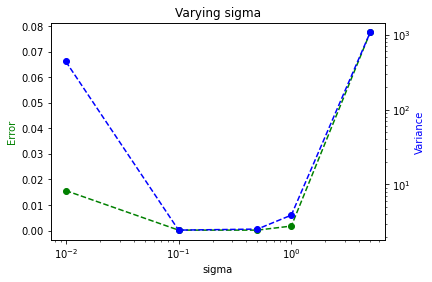}
  \caption{Test on $\sigma_A$}
  \label{fig:sigma}
\end{subfigure}
\begin{subfigure}{.4\textwidth}
  \centering
  \includegraphics[width=1\linewidth]{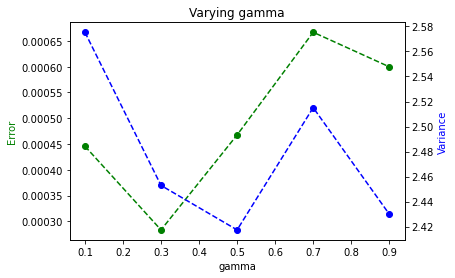}
  \caption{Test on $\gamma$}
  \label{fig:gamma}
\end{subfigure}
\begin{subfigure}{.4\textwidth}
  \centering
  \includegraphics[width=1\linewidth]{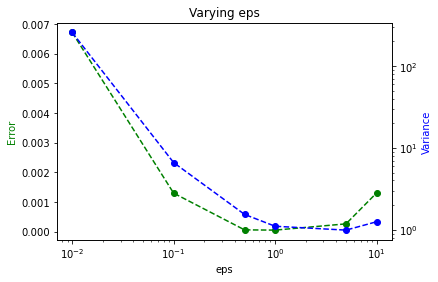}
  \caption{Test on $\epsilon$}
  \label{fig:eps1}
\end{subfigure}
\begin{subfigure}{.4\textwidth}
  \centering
  \includegraphics[width=1\linewidth]{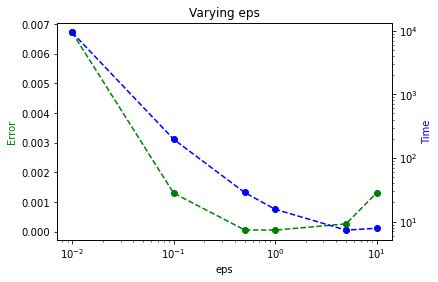}
  \caption{Test on $\epsilon$}
  \label{fig:eps2}
\end{subfigure}

\caption{
}
\end{figure}


\bibliographystyle{jmr}
\bibliography{biblio_MC.bib}

\newpage

\appendix

\section{Proofs: Unbiasedness}
Before proving the unbiased result, it is helpful to show that
\begin{equation}\label{verify_l2}
\begin{aligned} 
\mathbb{E}[\Xi_2(x, T, Z^{\pi})1_{\{\xi_1 > T\}}] = \mathbb{E}_{x}^{\mathbb{Q}}[L(X)_Tf(X_T)1_{\{T_1 \geq T\}}].
\end{aligned}
\end{equation}
By conditioning, we have
\begin{equation*}
\begin{aligned}
\mathbb{E}_{x}^{\mathbb{Q}}[L(X)_Tf(X_T)1_{\{T_1 \geq T\}}] &\overset{\mathrm{(a)}}{=} \mathbb{E}_{x}^{\mathbb{Q}}[\mathbb{E}_{\xi_1}^{\mathbb{Q}}[L(X)_Tf(X_T)1_{\{\xi_1 \geq T\}}]]\\
&\overset{\mathrm{(b)}}{=} \mathbb{E}_{x}^{\mathbb{Q}}[\mathbb{E}_{\xi_1}[L(X)_Tf(X_T)1_{\{\xi_1 \geq T\}}]]\\
&\overset{\mathrm{(c)}}{=} \mathbb{E}_{x}^{\mathbb{Q}}[1_{\{\xi_1 \geq T\}}\mathbb{E}_{\xi_1}[L_2(Y)_Tf(Y_T)]]\\
&\overset{\mathrm{(d)}}{=} \mathbb{E}_{x}[1_{\{\xi_1 \geq T\}}\mathbb{E}[L_2(Y)_Tf(Y_T)]],
\end{aligned}
\end{equation*}
where $(a)$ comes from tower property and the fact that $T_1 = \xi_1$ a.s. under $\mathbb{Q}$. $(b)$ comes from the fact that the law of Brownian Motion $W$ under measure $\mathbb{Q}$ and $\mathbb{P}$ are the same. $(c)$ holds because of $\eqref{construct}$ and $\eqref{L_process}$. $(d)$ holds because $\xi_1$ is independent of $Y$, and $\xi_1$ and $Y$ has the same distribution under $\mathbb{Q}$ and $\mathbb{P}$. Next, from $\eqref{eqn_sde_coupled}$, since $\bar{W}$ is independent from other processes, we know that for any function $g$, 
\begin{equation*}
\begin{aligned}
\mathbb{E}[\exp{\left(-\bar{A}_t\right)}g(Y_T)] = \exp(\sigma_A^2T/2)\mathbb{E}\left[\exp{\left(-\int_0^t \lambda(Y_s)ds\right)}g(Y_T)\right].
\end{aligned}
\end{equation*}
This along with Condition \ref{cdn_blackbox} yields
\begin{equation*}
\begin{aligned}
\mathbb{E}[\exp\left({-\sigma_A^2T/2}\right)L_2^{\Theta}(Z^{\pi})_{T}f(Y_T^{\pi})] = \mathbb{E}[L_2(Y)_Tf(Y_T)],
\end{aligned}
\end{equation*} 
which proves $\eqref{verify_l2}$.

We begin the proof of unbiasedness by induction, and before that we need some notations. Denote $N_t^{\mathbb{Q}}$ to be the number of jumps before time $t$ under measure $\mathbb{Q}$, and $T_i^{\mathbb{Q}}$ to be the $i$-th jump time of the process $X$ under measure $\mathbb{Q}$. Define event $A_{n, T} = \{N_T < n\}$. Moreover, denote $T_i^\xi$ to be $\sum_{j=1}^i\xi_j$, $N_T^\xi = \max\{n:T_n^\xi \leq T\}$, and event $A_{n,T}^\xi = \{N_T^\xi < n\}$.
Notice that under measure $\mathbb{Q}$, $T_i^\xi = T_i^\mathbb{Q}$ a.s. Also under measure $\mathbb{Q}$ we have $A_{n, T}^{\mathbb{Q}} = \{N_T^{\mathbb{Q}} < n\} = A_{n, T}^{\xi}$. If we can show
\begin{equation}\label{assum_thm_1}
\begin{aligned}
\mathbb{E}_{x}[U(x, T)1_{A_{n, T}^\xi}] = \mathbb{E}_{x}[f(X_T) 1_{A_{n, T}}],
\end{aligned}
\end{equation}
then from $\mathbb{E}[|U(x, T)|] < \infty$ (which follows from Theorem 2), Dominated Convergence Theorem will guarantee the unbiasedness result. For any $T > 0$, we start with the base case, on event $A_{1, T}^\xi = \{\xi_1 > T\} = \{T_1^{\mathbb{Q}} > T\}$, from $\eqref{verify_l2}$ we know
\begin{align}\label{ap_thm_unbiased_base}
    \mathbb{E}_{x}[U(x, T)1_{A_{1, T}^\xi}] = \mathbb{E}_{x}[\Xi_2(x, T, Z^{\pi})1_{\{\xi_1 > T\}}]  = \mathbb{E}_{x}^{\mathbb{Q}}[L(X)_Tf(X_T)1_{\{T_1 > T\}}] = \mathbb{E}_{x}[f(X_T) 1_{\{T_1 > T\}}].
\end{align}
Then we begin the induction hypothesis. Suppose that $\eqref{assum_thm_1}$ is true for some $n > 1$, 
we want to show 
\begin{equation}\label{assum_n+1_thm_1}
\begin{aligned}
\mathbb{E}_{x}[U(x, T)1_{A_{n+1, T}^\xi}] = \mathbb{E}_{x}[f(X_T) 1_{A_{n+1, T}}].
\end{aligned}
\end{equation}
Decomposing into two parts again and applying $\eqref{ap_thm_unbiased_base}$ yields
\begin{equation*}
\begin{aligned}
\mathbb{E}_{x}[U(x, T)1_{A_{n+1, T}^\xi}] &= \mathbb{E}_{x}[1_{\{\xi_1<T\}}U(x, T)1_{A_{n+1, T}^\xi}] + \mathbb{E}_{x}[1_{\{\xi_1\geq T\}}U(x, T)1_{A_{n+1, T}^\xi}]\\
&= \mathbb{E}_{x}[1_{\{\xi_1<T\}}U(x, T)1_{A_{n+1, T}^\xi}] + \mathbb{E}_{x}[f(X_T)1_{A_{1, T}}].
\end{aligned}
\end{equation*}
Therefore, it suffices to show that
\begin{equation*}
\begin{aligned}
\mathbb{E}_{x}[1_{\{\xi_1<T\}}U(x, T)1_{A_{n+1, T}^\xi}] = \mathbb{E}_{x}[f(X_T)1_{A_{n+1, T} \cap A_{1, T}^c}].
\end{aligned}
\end{equation*}
Recall the notation $X_{\xi_1}^\pi = Y_{\xi_1}^{\pi} + V_1^\pi$, and define event $A_{n, T,\xi_1}^{\xi}:= \{N_{T}^{\xi} - N_{\xi_1}^{\xi}<n\}$. Next,  
\begin{equation}\label{eqn:unbiased_before_last}
\begin{aligned}
&\hspace{5.5mm}\mathbb{E}_{x}[1_{\{\xi_1<T\}}U(x, T)1_{A_{n+1, T}^\xi}]\\
 &= \mathbb{E}_{x}[1_{\{\xi_1< T\}}\Xi_1(x, \xi_1, Z^{\pi})U(Y_{\xi_1}^{\pi} + V_1^\pi, T-\xi_1)1_{A_{n+1, T}^\xi}]\\
&=\mathbb{E}_{x}[1_{\{\xi_1< T\}}\mathbb{E}_{\xi_1}[\Xi_1(x, \xi_1, Z^{\pi})\mathbb{E}_{X_{\xi_1}^{\pi}, \xi_1}[U(X_{\xi_1}^{\pi}, T-\xi_1)1_{A_{n, T, \xi_1}^{\xi}}]]].
\end{aligned}
\end{equation}
In order to apply strong Markov property and the induction hypothesis, we need to introduce some new notations. Denote the process $\bar{X}$ as the jump-diffusion and diffusion process having the same dynamics as in \eqref{s1_JD_sde}, but with a starting point $\bar{X}_0 = X_{\xi_1}^\pi = Y_{\xi_1}^\pi + V_1^\pi$, where $\bar{V}_1^\pi = h(Y_{\xi_1}^\pi, R_1)$. Moreover, we denote $\bar{N}$ as the corresponding counting process related to $\bar{X}$, and denote the event $\bar{A}_{n,T} = \{\bar{N}_T < n\}$. From induction hypothesis
we know that 
\begin{equation*}
\begin{aligned}
\mathbb{E}_{X_{\xi_1}^{\pi}, \xi_1}[U(X_{\xi_1}^{\pi}, T-\xi_1)1_{A_{n, T, \xi_1}^{\xi}}] = \mathbb{E}[f(\bar{X}_{T-\xi_1})1_{\bar{A}_{n, T-\xi_1}} |\, \bar{X}_0 = X_{\xi_1}^{\pi}].
\end{aligned}
\end{equation*}
From the relation that
$\bar{X}_0 = X_{\xi_1}^\pi = Y_{\xi_1}^\pi + V_1^\pi$, for notational simplicity we denote

\begin{equation*}
\begin{aligned}
\zeta_x(x,\xi) &= \mathbb{E}\left.\left[f(\bar{X}_{T-\xi})1_{\bar{A}_{n, T - \xi}}\right|\bar{X}_0 = x\right]\\
\zeta_y(\bm{y},\xi) &= \mathbb{E}_{\bm{y}}[\mathbb{E}_{\bar{X}_0}[f(\bar{X}_{T-\xi})1_{\bar{A}_{n, T - \xi}}]] = \mathbb{E}_{\bm{y}}[\zeta_x(\bar{X}_0,\xi)],
\end{aligned}
\end{equation*}
where $\bar{X}_0 = \bm{y} + h(\bm{y}, R_1)$ and $R_1$ is an independent random variable with law $\nu$. Then, to finish the proof, from induction hypothesis and $\eqref{eqn:unbiased_before_last}$ we have
\begin{equation*}
\begin{aligned}
&\hspace{5.5mm}\mathbb{E}_{x}[1_{\{\xi_1<T\}}U(x, T)1_{A_{n+1, T}^\xi}]\\
&=\mathbb{E}_{x}[1_{\{\xi_1< T\}}\mathbb{E}_{\xi_1}[\Xi_1(x, \xi_1,  Z^{\pi})\mathbb{E}_{X_{\xi_1}^{\pi}, \xi_1}[f(\bar{X}_{T-\xi_1})1_{\bar{A}_{n, T - \xi_1}}]]]\\
&=\mathbb{E}_{x}[1_{\{\xi_1< T\}}\mathbb{E}_{\xi_1}[\Xi_1(x, \xi_1,  Z^{\pi})\mathbb{E}_{Y^{\pi}_{\xi_1}}[\mathbb{E}_{X_{\xi_1}^{\pi}, \xi_1}[f(\bar{X}_{T-\xi_1})1_{\bar{A}_{n, T - \xi_1}}]]]]\\
&=\mathbb{E}_{x}[1_{\{\xi_1< T\}}\mathbb{E}_{\xi_1}[\Xi_1(x, \xi_1, Z^{\pi})\zeta_y(Y_{\xi_1}^{\pi},\xi_1)]]\\
&=\mathbb{E}_{x}\left[1_{\{\xi_1< T\}}\mathbb{E}_{\xi_1}[\exp\left({-\sigma_A^2\xi_1/2}\right)L_1^{\Theta}(Z^{\pi})_{\xi_1}\zeta_y(Y^{\pi}_{\xi_1},\xi_1)]\right]\\
&\overset{\mathrm{(a)}}{=}\mathbb{E}_{x}\left[1_{\{\xi_1< T\}}\exp\left(-\int_0^{\xi_1}Y_sds+\xi_1\lambda(x)\right)\frac{\lambda(Y_{\xi_1})}{\lambda(x)}\zeta_y(Y_{\xi_1},\xi_1)\right]\\
&\overset{\mathrm{(b)}}{=}\mathbb{E}_{x}\left[1_{\{\xi_1< T\}}\exp\left(-\int_0^{\xi_1}Y_sds+\xi_1\lambda(x)\right)\frac{\lambda(Y_{\xi_1})}{\lambda(x)}\zeta_x(Y_{\xi_1}+V_1,\xi_1)\right]\\
&\overset{\mathrm{(c)}}{=}\mathbb{E}_{x}^{\mathbb{Q}}\left[1_{\{T_1< T\}}\exp\left(-\int_0^{T_1}X_sds+\xi_1\lambda(x)\right)\frac{\lambda(X_{T_1}^{-})}{\lambda(x)}\zeta_x(X_{T_1},T_1)\right]\\
&\overset{\mathrm{(d)}}{=}\mathbb{E}_{x}\left[1_{\{T_1< T\}}\zeta_x(X_{T_1},T_1)\right]\\
&= \mathbb{E}_{x}\left[1_{\{T_1< T\}}\mathbb{E}_{X_{T_1}}[f(\bar{X}_{T-T_1})1_{\bar{A}_{n, T - T_1}}]\right]\\
&= \mathbb{E}_{x}\left[1_{\{T_1< T\}}1_{A_{n+1, T}}f(X_{T})\right] = \mathbb{E}_{x}\left[1_{A_{n+1, T}\cap A_{1,T}^c}f(X_{T})\right],
\end{aligned}
\end{equation*}
where $(a)$ comes from $\eqref{L_process}$ and $\eqref{L_process_theta}$, $(b)$ comes from the tower property, $(c)$ is because the dynamics of $X$ and $Y$ from time $(0, T_1)$ under measure $\mathbb{Q}$ is the same, and $(d)$ is the change of measure formula.

\section{Proof: Finite Variance}
From formula $\eqref{recursive}$, by denoting $X_0 = x$, $X_{i+1}^\pi = Y^{\pi,i+1}_{\xi_{i+1}} +V_{i+1}^\pi$, $T_i^\xi = \sum_{j=1}^i\xi_j$, and $N_T^\xi = \max\{n:T_n^\xi < T\}$. From equation above we can write $U(x, T)$ as

\begin{equation*}
\begin{aligned}	
U(x, T) &= \left(\prod_{i=1}^{N^{\mathbb{\xi}}_T}\Xi_1(X_{i-1}^\pi,\xi_i,Z^{\pi, i})\right)\Xi_2(X_{N_T^\xi}^\pi, T - T_{N_T^\xi}^\xi, Z^{\pi, N_T^\xi+1}).
\end{aligned}
\end{equation*}
Since the dynamic of our approximated Euler process directly controls the intensity, which affects the counting process $N_T^\xi$, if we directly condition on $N_T^\xi$, the law of the Euler process might become intractable. Therefore, we use thinning, a standard technique in simulation, to disentangle this dependency. Observe that $\xi_i$ is sampled from exponential distribution with intensity $\lambda(X_i^\pi)$, and that $\lambda_1 \leq \lambda(x) \leq \lambda_2$ for any $x \in \mathbb{R}^d$. We know that from thinning, there exists $\xi_i^-$ such that for every $i$, there exist strictly positive $k_i$ and $c_i$ such that $\xi_i = \sum_{j = k_i}^{k_i+c_i}\xi_j^-$, and the cumulative distribution funciton of $\xi_i^-$ is $1-e^{-\lambda_2 t}$. We denote $T_n^- = \sum_{i=1}^n \xi_i^-$ and $N_T^+ = \max\{n:T_n^- < T\}$. We consider bounding the expectation of $U(x, T)$ conditioned on $N_T^+$ so that 
\begin{equation*}
\begin{aligned}	
\mathbb{E}[U(x, T)^2] &= \sum_{m=0}^{+\infty} \mathbb{E}\left[\left.U(x, T)^2\right|N_T^+ = m\right]P(N_T^+ = m)
\end{aligned}
\end{equation*}
will be bounded. The advantage of conditioning on $N_T^+$ instead of $N_T^\xi$ is that we can disentangle the effect of the number of jumps $N_T^\xi$ on the approximation process $Y^\pi$, hence we can reuse the previous bound on the gaussian density of $Y^\pi$. Under the event $\{N_T^+ = m\}$, we have
$T_{m}^- < T < T_{m+1}^-$ almost surely.
\begin{figure}[t]
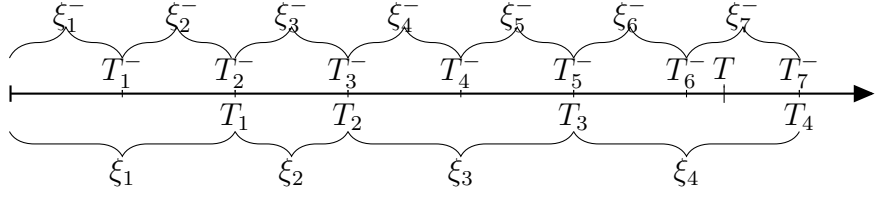

\centering
    \includestandalone{tikzcode}
    \caption{Example from thining. Notice that in this particular example, we have that $C(1) = C(2) = 1$, $C(3) = 2$, $C(4) = C(5) = C(6) = C(7) = 3$, $N_T = 3$ and $N_T^+ = 6$.}
\label{fig_thinning}
\end{figure}
Then, we define $C(l) = \min\{\min\{k: T_k \geq T_l^-\}, N_T^\xi\}$ (see Figure \ref{fig_thinning} for an example). For a shorthand denote $\mathbb{E}_{m,+}[\cdot]$ to be $\mathbb{E}[\cdot|N_T^+ = m]$, and $Y_t^{\pi,i,(j)}$ to denote the $i$-th segment (corresponding to the Euler approximation in the period $(T_i^{\xi}, T_{i+1}^{\xi})$) of the Euler approximation for $X$ in dimension $j$, evaluated at $t$. We use the following lemma for our proof.
\begin{lemma} \label{lemma_recursive_variance}
For $T_{m}^- < T < T_{m+1}^-$ and $f$ with exponential growth, we have 
\begin{equation}\label{ineq_variance_final_term}
\begin{aligned}	
\mathbb{E}_{m,+}\left[U(x, T)^2\right] &\leq M_T\exp(2c_1vd-\sigma_A^2(T-T_{N_T^\xi}^\xi))\\
&\hspace{10mm}\cdot\mathbb{E}_{m,+}\left[\prod_{i=1}^{N_T^\xi}\Xi_1(X_{i-1}^\pi,\xi_i,Z^{\pi, i})^2\prod_{j=1}^d \cosh\left(2c_1Y_{\xi_{N_T^\xi}}^{\pi, N_T^\xi,(j)}\right)\right],
\end{aligned}
\end{equation}
Moreover, for $l \geq 1$ we have
\begin{equation} \label{ineq_variance_previous_term}
\begin{aligned}	
&\hspace{5mm}\mathbb{E}_{l,+}\left[\prod_{i=1}^{C(l)}\Xi_1(X_{i-1}^\pi,\xi_i,Z^{\pi, i})^2\prod_{j=1}^d \cosh\left(2c_1Y_{\xi_{C(l)}}^{\pi, C(l),(j)}\right)\right]\\
&\leq 
M_T\exp(2c_1vd)\left(\frac{\lambda_2}{\lambda_1}\right)^2\mathbb{E}_{l,+}\left[\prod_{i=1}^{C(l-1)}\Xi_1(X_{i-1}^\pi,\xi_i,Z^{\pi, i})^2\prod_{j=1}^d \cosh(2c_1Y_{\xi_{C(l-1)}}^{\pi, C(l-1),(j)})\right]
\end{aligned}
\end{equation}
\begin{proof}
Since $\{T_i^-\}_{i=1}^{m+1}$ are sampled before $\{Z^{\pi, i}\}_{i=1}^{N_T^\xi}$ are sampled, conditioning on $\{N_T^+ = m\}$ does not affect the law of the Brownian Motion. Under the event $\{N_T^+ = m\}$, we have $\mathcal{F}_{T_{N_T^\xi}} \subseteq \mathcal{F}_{T_m^-}$, hence
\begin{equation*}
\begin{aligned}	
\mathbb{E}_{m,+}\left[U(x, T)^2\right] = \mathbb{E}_{m,+}\left[\prod_{i=1}^{N_T^\xi}\Xi_1(X_{i-1}^\pi,\xi_i,Z^{\pi, i})^2\mathbb{E}_{m,+}\left[\left.\Xi_2(X_{N_T^\xi}^\pi, T - T_{N_T^\xi}^\xi, Z^{\pi, N_T^\xi+1})^2\right|\mathcal{F}_{T_{N_T^\xi}}\right]\right].
\end{aligned}
\end{equation*}
From Condition \ref{cdn_var}, strong Markov property, and the definition of $\Xi_2$ we have 

\begin{equation}\label{eqn_bound_jump}
\begin{aligned}	
&\hspace{6mm}\mathbb{E}_{m,+}\left[\left.\Xi_2(X_{N_T^\xi}^\pi, T - T_{N_T^\xi}^\xi, Z^{\pi, N_T^\xi+1})^2\right|\mathcal{F}_{T_{N_T^\xi}}\right]\\ 
&\leq M_T\exp(\sigma_A^2(T-T_{N_T^\xi}^\xi))\mathbb{E}_{m,+}\left[\left.\prod_{j=1}^d \cosh\left(2c_1\left(Y^{\pi,N_T^\xi}_{\xi_{N_T^\xi}} +V_{N_T^\xi}^{\pi}\right)^{(j)}\right)\right|\mathcal{F}_{T_{N_T^\xi}}\right]\\
&\leq M_T\exp(2c_1vd+\sigma_A^2(T-T_{N_T^\xi}^\xi))\mathbb{E}_{m,+}\left[\left.\prod_{j=1}^d \cosh\left(2c_1Y^{\pi,N_T^{\xi},(j)}_{\xi_{N_T^{\xi}}}\right)\right|\mathcal{F}_{T_{N_T^{\xi}}}\right],
\end{aligned}
\end{equation}
where the last inequality comes from the Assumption \ref{jasm} that $|V_i^{\pi}| = |h(Y_{\xi_i}^{\pi, i}, R_i)|  \leq v$, and we are done with proving \eqref{ineq_variance_final_term}. For proving the second inequality \eqref{ineq_variance_previous_term}, we find that by definition of $C(l)$, for any $l > 0$, we will always have either $T_{C(l) - 1} = T_{l-1}^-$ or $T_{C(l) - 1} < T_{l-1}^-$. Hence we have
\begin{equation}\label{eqn_var_recursion_question}
\begin{aligned}	
&\hspace{6mm}\mathbb{E}_{l,+}\left[\prod_{i=1}^{C(l)}\Xi_1(X_{i-1},\xi_i,Z^{\pi, i})^2\prod_{j=1}^d \cosh\left(2c_1Y_{\xi_{C(l)}}^{\pi, C(l),(j)}\right)\right]\\ 
&= \mathbb{E}_{l,+}\left[1_{\{T_{C(l) - 1} = T_{l-1}^-\}}\prod_{i=1}^{C(l)}\Xi_1(X_{i-1},\xi_i,Z^{\pi, i})^2\prod_{j=1}^d \cosh\left(2c_1Y_{\xi_{C(l)}}^{\pi, C(l),(j)}\right)\right]\\
&\hspace{10mm}+ \mathbb{E}_{l,+}\left[1_{\{T_{C(l) - 1} < T_{l-1}^-\}}\prod_{i=1}^{C(l)}\Xi_1(X_{i-1},\xi_i,Z^{\pi, i})^2\prod_{j=1}^d \cosh\left(2c_1Y_{\xi_{C(l)}}^{\pi, C(l),(j)}\right)\right].
\end{aligned}
\end{equation}
Again, from Condition \ref{cdn_var}, the definition of $\Xi_1$ and the same approach in \eqref{eqn_bound_jump} we have
\begin{equation*}
    \begin{aligned}
    &\hspace{5mm}\mathbb{E}_{l,+}\left[\left.\Xi_1(X_{C(l)-1},\xi_{C(l)},Z^{\pi, C(l)})^2\prod_{j=1}^d \cosh\left(2c_1Y_{\xi_{C(l)}}^{\pi, C(l),(j)}\right)\right|\mathcal{F}_{T_{C(l)-1}}\right]\\
    &\leq M_T\exp(2c_1vd-\sigma_A^2\xi_{C(l)})\left(\frac{\lambda_2}{\lambda_1}\right)^2\prod_{j=1}^d \cosh\left(2c_1Y_{\xi_{C(l)-1}}^{\pi, C(l)-1,(j)}\right).
    \end{aligned}
\end{equation*}
Next, from tower property and strong Markov property, by conditioning on $\mathcal{F}_{T_{C(l)-1}}$ we know that
\begin{equation*}
\begin{aligned}	
&\hspace{5mm}\mathbb{E}_{l,+}\left[1_{\{T_{C(l) - 1} = T_{l-1}^-\}}\prod_{i=1}^{C(l)}\Xi_1(X_{i-1},\xi_i,Z^{\pi, i})^2\prod_{j=1}^d \cosh\left(2c_1Y_{\xi_{C(l)}}^{\pi, C(l),(j)}\right)\right]\\
&\leq M_T\exp(2c_1vd-\sigma_A^2\xi_{C(l)})\left(\frac{\lambda_2}{\lambda_1}\right)^2\\
&\hspace{10mm}\cdot\mathbb{E}_{l,+}\left[1_{\{T_{C(l) - 1} = T_{l-1}^-\}}\prod_{i=1}^{C(l)-1}\Xi_1(X_{i-1},\xi_i,Z^{\pi, i})^2\prod_{j=1}^d \cosh\left(2c_1Y_{\xi_{C(l)-1}}^{\pi, C(l)-1,(j)}\right)\right].
\end{aligned}
\end{equation*}
Also observe that when $T_{C(l) - 1} = T_{l-1}^-$, we have $C(l)-1 = C(l-1)$, hence 
\begin{equation}\label{eqn_var_recursion_result_1}
\begin{aligned}	
&\hspace{5mm}\mathbb{E}_{l,+}\left[1_{\{T_{C(l) - 1} = T_{l-1}^-\}}\prod_{i=1}^{C(l)}\Xi_1(X_{i-1},\xi_i,Z^{\pi, i})^2\prod_{j=1}^d \cosh\left(2c_1Y_{\xi_{C(l)}}^{\pi, C(l),(j)}\right)\right]\\
&\leq M_T\exp(2c_1vd-\sigma_A^2\xi_{C(l)})\left(\frac{\lambda_2}{\lambda_1}\right)^2\\
&\hspace{10mm}\cdot\mathbb{E}_{l,+}\left[1_{\{T_{C(l-1)} = T_{l-1}^-\}}\prod_{i=1}^{C(l-1)}\Xi_1(X_{i-1},\xi_i,Z^{\pi, i})^2\prod_{j=1}^d \cosh\left(2c_1Y_{\xi_{C(l-1)}}^{\pi, C(l-1),(j)}\right)\right].
\end{aligned}
\end{equation}
For the second term in \eqref{eqn_var_recursion_question}, if $T_{C(l) - 1} < T_{l-1}^-$, we have $C(l-1) = C(l)$. It implies
\begin{equation}\label{eqn_var_recursion_result_2}
\begin{aligned}	
&\hspace{6mm}\mathbb{E}_{l,+}\left[1_{\{T_{C(l) - 1} < T_{l-1}^-\}}\prod_{i=1}^{C(l)}\Xi_1(X_{i-1},\xi_i,Z^{\pi, i})^2\prod_{j=1}^d \cosh\left(2c_1Y_{\xi_{C(l)}}^{\pi, C(l),(j)}\right)\right]\\
&\leq  M_T\exp(2c_1vd-\sigma_A^2\xi_{C(l)})\left(\frac{\lambda_2}{\lambda_1}\right)^2\\
&\hspace{10mm}\mathbb{E}_{l,+}\left[1_{\{T_{C(l) - 1} < T_{l-1}^-\}}\prod_{i=1}^{C(l-1)}\Xi_1(X_{i-1},\xi_i,Z^{\pi, i})^2\prod_{j=1}^d \cosh\left(2c_1Y_{\xi_{C(l-1)}}^{\pi, C(l-1),(j)}\right)\right],
\end{aligned}
\end{equation}
where in the last equation WLOG we assume $ M_T\exp(2c_1vd-\sigma_A^2\xi_{C(m)})\left(\frac{\lambda_2}{\lambda_1}\right)^2 > 1$. Combining \eqref{eqn_var_recursion_question}, \eqref{eqn_var_recursion_result_1}, and \eqref{eqn_var_recursion_result_2} will prove the second equation stated in the lemma, thereby finishing the proof of lemma \ref{lemma_recursive_variance}.
\end{proof}
\end{lemma}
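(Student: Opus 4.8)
The plan is to prove the two displays by a single mechanism: \emph{peel off the last Euler segment} and control its conditional second moment by the variance estimate of Condition~\ref{cdn_var}, using the strong Markov property of the construction \eqref{construct} at the jump epochs together with the fact that the thinning times $\{T_i^-\}_{i\ge1}$ are drawn independently of the Brownian motions driving the segments $\{Z^{\pi,i}\}$. I would first record this independence explicitly: it is what makes conditioning on $\{N_T^+=m\}$ (resp.\ on $\{N_T^+=l\}$) harmless for the law of the $Z^{\pi,i}$, so that the strong Markov property can be applied segment by segment, and it gives the inclusion $\mathcal{F}_{T_{N_T^\xi}^\xi}\subseteq\mathcal{F}_{T_m^-}$ used throughout.

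For \eqref{ineq_variance_final_term}, since $\prod_{i=1}^{N_T^\xi}\Xi_1(X_{i-1}^\pi,\xi_i,Z^{\pi,i})^2$ is $\mathcal{F}_{T_{N_T^\xi}^\xi}$-measurable, I would factor it out of a conditional expectation and reduce to bounding $\mathbb{E}_{m,+}[\,\Xi_2(X_{N_T^\xi}^\pi,\,T-T_{N_T^\xi}^\xi,\,Z^{\pi,N_T^\xi+1})^2 \mid \mathcal{F}_{T_{N_T^\xi}^\xi}\,]$. Writing $\Xi_2(x',t,Z^\pi)=\exp(-\sigma_A^2 t/2 + t\lambda(x'))\exp(-\bar{A}_t^\pi)f(Y_t^\pi)\Theta(Z^\pi,t)$, squaring, using $\lambda(x')\le\lambda_2$, and invoking Condition~\ref{cdn_var} with the segment restarted at $(x',0)$, $x'=X_{N_T^\xi}^\pi$, produces a bound of the form $M_T\exp(-\sigma_A^2(T-T_{N_T^\xi}^\xi))\exp(2c_1\|x'\|_1)$, the factor $e^{2T\lambda_2}$ absorbed into $M_T$. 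Then $\exp(2c_1\|x'\|_1)\le 2^d\prod_{j=1}^d\cosh(2c_1 (x')^{(j)})$, and, since $(x')^{(j)}=Y_{\xi_{N_T^\xi}}^{\pi,N_T^\xi,(j)}+V_{N_T^\xi}^{\pi,(j)}$ with $|V_{N_T^\xi}^{\pi,(j)}|\le v$ (Assumption~\ref{jasm}), the elementary inequality $\cosh(a+b)\le 2e^{|b|}\cosh(a)$ replaces the argument $2c_1(x')^{(j)}$ by $2c_1 Y_{\xi_{N_T^\xi}}^{\pi,N_T^\xi,(j)}$ at the price of a factor $e^{2c_1 vd}$ (and constants $2^{2d}$ folded into $M_T$), which gives \eqref{ineq_variance_final_term}.

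For \eqref{ineq_variance_previous_term} the same peeling is applied to the $C(l)$-th segment, but one must first split the expectation according to whether $T_{C(l)-1}=T_{l-1}^-$ or $T_{C(l)-1}<T_{l-1}^-$; because every $\xi$-jump time coincides with a thinned time, these exhaust all cases, and I would verify the index identities $C(l-1)=C(l)-1$ in the former and $C(l-1)=C(l)$ in the latter. On $\{T_{C(l)-1}<T_{l-1}^-\}$ there is nothing to estimate: $\prod_{i=1}^{C(l)}\Xi_1^2$ and $Y_{\xi_{C(l)}}^{\pi,C(l),(j)}$ already coincide with their $C(l-1)$-counterparts, so after enlarging $M_T$ so that $M_T e^{2c_1 vd}(\lambda_2/\lambda_1)^2\ge1$ the term is dominated by the claimed right-hand side. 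On $\{T_{C(l)-1}=T_{l-1}^-\}$ I would condition on $\mathcal{F}_{T_{C(l)-1}^\xi}=\mathcal{F}_{T_{C(l-1)}^\xi}$ and bound $\mathbb{E}[\,\Xi_1(X_{C(l)-1}^\pi,\xi_{C(l)},Z^{\pi,C(l)})^2\prod_{j=1}^d\cosh(2c_1 Y_{\xi_{C(l)}}^{\pi,C(l),(j)}) \mid \mathcal{F}_{T_{C(l)-1}^\xi}\,]$ exactly as above; the only new features are that $\Xi_1$ carries the extra factor $\lambda(Y_t^\pi)^2/\lambda(x')^2\le(\lambda_2/\lambda_1)^2$ (the source of the $(\lambda_2/\lambda_1)^2$ in the statement), that the $\cosh$ weight is handled by applying Condition~\ref{cdn_var} to its square root $\tilde f(y)=\prod_{j=1}^d\cosh(2c_1 y^{(j)})^{1/2}\le e^{c_1\|y\|_1}$, and that $e^{-\sigma_A^2\xi_{C(l)}}\le1$ so no $\sigma_A$ term survives. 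Summing the two cases (their indicators add to $1$) yields \eqref{ineq_variance_previous_term}.

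I expect the main obstacle to be the combinatorial bookkeeping behind the second inequality — establishing the dichotomy for $T_{C(l)-1}$ and the consequent identities $C(l-1)=C(l)-1$ and $C(l-1)=C(l)$, which are exactly what let the recursion telescope when these inequalities are iterated. The accompanying measure-theoretic point (that conditioning on $N_T^+$ does not perturb the Euler segments, so that the strong Markov property and Condition~\ref{cdn_var} may be invoked segment by segment) must be stated with care but is not deep, and the remaining steps are routine applications of $\lambda_1\le\lambda\le\lambda_2$, $\|h\|_\infty\le v$, and elementary bounds on $\cosh$.
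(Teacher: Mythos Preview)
Your proposal is correct and follows essentially the same route as the paper: you peel off the last Euler segment via conditioning on $\mathcal{F}_{T_{N_T^\xi}^\xi}$ (resp.\ $\mathcal{F}_{T_{C(l)-1}^\xi}$), invoke Condition~\ref{cdn_var} together with the strong Markov property and the independence of the thinning skeleton from the Brownian drivers, and then absorb the jump via $\|h\|_\infty\le v$; for \eqref{ineq_variance_previous_term} you split on $\{T_{C(l)-1}=T_{l-1}^-\}$ versus $\{T_{C(l)-1}<T_{l-1}^-\}$ and use the identities $C(l-1)=C(l)-1$ and $C(l-1)=C(l)$ exactly as the paper does. Your treatment is in fact a bit more explicit than the paper's in two places---you spell out how the $\cosh$ weight falls under Condition~\ref{cdn_var} via $\tilde f(y)=\prod_j\cosh(2c_1 y^{(j)})^{1/2}\le e^{c_1\|y\|_1}$, and you track the sign of the $\sigma_A^2$ term correctly---but the architecture is identical.
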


Using Lemma \ref{lemma_recursive_variance} with the observation that $C(m) = N_T^{\xi}$ and $C(1) = 1$, conditioned on $\{N_T^+ = m\}$ we have 

\begin{equation*}
\begin{aligned}	
\mathbb{E}_{m,+}\left[U(x, T)^2\right] \leq \exp(-\sigma_A^2T)\left( M_T\exp(2c_1vd)\left(\frac{\lambda_2}{\lambda_1}\right)^2\right)^{m+1}\prod_{j=1}^d \cosh\left(2c_1x^{(j)}\right).
\end{aligned}
\end{equation*}
Lastly,
\begin{equation*}
\begin{aligned}	
\mathbb{E}[U(x, T)^2] &= \sum_{m=0}^{+\infty} \mathbb{E}\left[\left.U(x, T)^2\right|N_T^+ = m\right]P(N_T^+ = m)\\
&\leq \exp(-\sigma_A^2T)\prod_{j=1}^d \cosh\left(2c_1x^{(j)}\right)\\
&\hspace{10mm}\cdot\sum_{m=0}^{+\infty}\left(M_T\exp(2c_1vd)\left(\frac{\lambda_2}{\lambda_1}\right)^{2}\right)^{m+1}\frac{(\lambda_2T)^me^{-\lambda_2T}}{m!} < +\infty.
\end{aligned}
\end{equation*}

\section{Other Lemmas}
For notational simplicity, in this section it is convenient to denote $X = (Y, Z) \in \mathbb{R}^{d+1}$ instead of $Z = (Y, \bar{A}) \in \mathbb{R}^{d+1}$. The reason is that sometimes we do not have to view the process as a jump-diffusion process, and we can save the notation for $\bar{A}$, which might confuse reader with the volatility matrix $a(y) = \sigma(y)^\top\sigma(y)$. Notice that we use the lower case $x = (y, z)$ to denote the the realized sample. Let $\varphi_c$ denote the multivariate Gaussian density
with a zero mean and variance (matrix) $c$.
Denote by $q$ the transition kernel of the Euler
process $X^\pi = (Y^\pi,Z^\pi)$ defined in on some interval
$[\tau_k,\tau_{k+1})$, given
$(X^\pi_{\tau_k},\tau_k,\tau_{k+1})$.
The law of $(Y^\pi,Z^\pi)$ given 
$X^\pi_{\tau_k} = (y_1,z_1) \in \rn^{d+1}$ is Gaussian
with covariances $a(y_1) = (\sigma\sigma^\top)(y_1) 
\in \rn^{d\times d}$
and $\sigma_A^2$ for $Y^\pi$ and $Z^\pi$ respectively
with means $y_1 + \mu(y_1)$ and $z_1 + \lambda(z_1)$.
For initial and final points 
$x_1 = (y_1, z_1)$ and 
$x_2 = (y_2, z_2) \in \rn^{d+1}$, the 
density $q_t(x_1, x_2)$ decomposes as
\begin{align} \label{Xplaw}
  q_t(x_1, x_2) = \varphi_{t a(y_1)} (y_2-y_1-\mu(y_1) t)
\varphi_{t \sigma_A^2} (z_2 - z_1 - \lambda(y_1)t)
\end{align}
as the Brownian motions $W$ and $B$ 
driving $Y^\pi$ and $Z^\pi$ are independent. Moreover, with a slight abuse of notation, sometimes we use $q_t$ to denote the transition kernel of the $y$ or $z$ component separately, i.e.
$$  q_t(y_1, y_2) = \varphi_{t a(y_1)} (y_2-y_1-\mu(y_1) t)$$
$$  q_t(z_1, z_2) = \varphi_{t \sigma_A^2} (z_2 - z_1 - \lambda(y_1)t).$$

Then we begin to introduce some lemmas.

\begin{lemma} \label{lemma_bound_thetaq}
Under Assumption \ref{dasm} and \ref{jasm}, for $x_i = (y_i, z_i)$ and $i \in \{1,2\}$ there exists $C_T$ such that for all $t < T$ we have
\begin{align}\label{ine_theta}
|\theta_t(x_1, x_2)^pq_t(x_1,x_2)| &\leq \cfrac{C_T}{t^{p/2}}\varphi_{4a_2t}(y_2-y_1)\varphi_{2\sigma_A^2t}(z_2-z_1-\lambda(y_1)t).
\end{align}
\end{lemma}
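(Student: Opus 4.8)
The plan is to treat $\theta_t$ as a ``polynomial of Gaussian type'' in the Euler increments and then to absorb that polynomial, together with the Euler drift shift, into a mild inflation of the two covariances appearing in $q_t$ via \eqref{Xplaw}.

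First I would record the ingredients coming from Assumptions~\ref{dasm}--\ref{jasm}: the functions $\mu$, $a=\sigma\sigma^\top$, $\lambda$ and their first derivatives are uniformly bounded, $\mu,a,\lambda$ are globally Lipschitz, and uniform ellipticity gives $a_1 I \preceq a(y_1)\preceq a_2 I$, hence $\|a(y_1)^{-1}\|\le 1/a_1$. Writing $w = y_2-y_1-\mu(y_1)t$ and $u = z_2-z_1-\lambda(y_1)t$ (so that $q_t(x_1,x_2)=\varphi_{ta(y_1)}(w)\,\varphi_{t\sigma_A^2}(u)$), the Hermite weights satisfy $|h_t^i|\le |w|/(a_1 t)$ and $|h_t^{ij}|\le |w|^2/(a_1^2t^2)+1/(a_1 t)$, while the extra summand in \eqref{eqn:def_theta} is bounded by $|\lambda(y_2)-\lambda(y_1)|\,|u|/(t\sigma_A^2)$. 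Using $|a(y_2)-a(y_1)|,|\mu(y_2)-\mu(y_1)|,|\lambda(y_2)-\lambda(y_1)|\le L|y_2-y_1|\le L|w|+L\|\mu\|_\infty t$, I would read off from \eqref{theta} and \eqref{eqn:def_theta} an explicit envelope $|\theta_t(x_1,x_2)|\le P_t(|w|,|u|)$, a finite sum of monomials $c\,|w|^k|u|^j t^{-m}$. The key structural fact to verify here is the \emph{scaling balance} $k+j-2m\ge -1$ for every such monomial, the extremal case being $k=3,j=0,m=2$ produced by the term $(a^{ij}(y_2)-a^{ij}(y_1))h_t^{ij}\sim |w|^3/t^2$. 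Raising to the $p$-th power, $|\theta_t|^p$ is then a finite sum of monomials $c\,|w|^K|u|^Jt^{-M}$ with $K+J-2M\ge -p$.

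Next I would invoke three elementary Gaussian estimates, all consequences of ellipticity and completing the square: (i) \emph{polynomial absorption}, $|w|^K\varphi_{ta(y_1)}(w)\le C_K\,t^{K/2}\,\varphi_{2ta(y_1)}(w)$ and $|u|^J\varphi_{t\sigma_A^2}(u)\le C_J\,t^{J/2}\,\varphi_{2t\sigma_A^2}(u)$, obtained by halving the exponent and using $\sup_{r\ge0}r^K e^{-r^2/(4ta_2)}=C_K(ta_2)^{K/2}$ (one covariance doubling absorbs an arbitrarily large total power); (ii) \emph{drift removal}, $\varphi_{2ta(y_1)}(w)\le 2^{d/2}e^{t\|\mu\|_\infty^2/(4a_1)}\,\varphi_{4ta(y_1)}(y_2-y_1)$, since $(\mu(y_1)t)^\top(2ta(y_1))^{-1}(\mu(y_1)t)\le t\|\mu\|_\infty^2/(2a_1)$; (iii) \emph{covariance comparison}, $\varphi_{4ta(y_1)}(v)\le (a_2/a_1)^{d/2}\varphi_{4a_2 t}(v)$, from $a(y_1)\preceq a_2 I$ and $|a(y_1)|\ge a_1^d$. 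Applying (i) on both coordinates to absorb the total powers $|w|^K$ and $|u|^J$, then (ii) and (iii) on the $y$-coordinate only — the $z$-coordinate needs no drift removal, which is precisely why the target covariance there is $2\sigma_A^2 t$ and not $4\sigma_A^2 t$ — a monomial $c\,|w|^K|u|^Jt^{-M}$ times $q_t$ is dominated by $C_T\,t^{(K+J-2M)/2}\,\varphi_{4a_2 t}(y_2-y_1)\,\varphi_{2\sigma_A^2 t}(u)$. Because $(K+J-2M)/2\ge -p/2$, for $t<T$ one has $t^{(K+J-2M)/2}\le T^{(K+J-2M+p)/2}\,t^{-p/2}$, and summing the finitely many monomials gives \eqref{ine_theta} with $C_T$ collecting $L$, $d$, $a_1,a_2$, $\lambda_2$, the drift factor $e^{CT}$, the combinatorial constants $C_K,C_J$, and a fixed power of $T$.

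The main obstacle is the bookkeeping in step two: one must expand \eqref{theta}--\eqref{eqn:def_theta} carefully, distinguishing coefficients that are genuinely bounded from those that are only Lipschitz (and hence carry an extra factor $|w|$), in order to confirm the balance $k+j-2m\ge -1$ for \emph{every} monomial of $\theta_t$. The Lipschitz increment multiplying the second-order Hermite weight $h_t^{ij}$ is the tight case and is what forces the rate $t^{-p/2}$; all the remaining terms are strictly less singular, and the Gaussian manipulations of step three are routine once the correct covariance inflation budget ($t a(y_1)\to 4a_2 t$ on the $y$-block, $t\sigma_A^2\to 2\sigma_A^2 t$ on the $z$-block) has been identified.
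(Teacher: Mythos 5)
Your proposal is correct, but it follows a genuinely different route from the paper. The paper does not build a polynomial envelope for $\theta_t$; instead it works with $|\theta_t(x_1,x_2)\,q_t(x_1,x_2)^{1/p}|$, splits it by the triangle inequality according to \eqref{eqn:def_theta} into the pure diffusion part $\vartheta_t$ and the new cross term $(\lambda(y_2)-\lambda(y_1))\frac{z_2-z_1-\lambda(y_1)t}{\sigma_A^2 t}$, and then outsources the hard part: the bound $|\vartheta_t\,q_t(y_1,y_2)^{1/p}|\le (C_T' t^{-p/2}\varphi_{4a_2t}(y_2-y_1))^{1/p}$ is quoted from Corollary 4.2 of Andersson--Kohatsu-Higa, the comparison of the Euler kernel with an inflated Gaussian from their Lemma A.1, while the cross term is handled by Lipschitz continuity of $\lambda$, the elementary bound $q_t(z_1,z_2)\le\sqrt2\,\varphi_{2\sigma_A^2t}(z_2-z_1-\lambda(y_1)t)$, and boundedness of $|x|e^{-x^2}$; raising the resulting bound to the $p$-th power gives \eqref{ine_theta}. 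You instead reprove everything from first principles: the envelope $|\theta_t|\le P_t(|w|,|u|)$ with the scaling balance $k+j-2m\ge-1$ (tight for the term $(a^{ij}(y_2)-a^{ij}(y_1))h_t^{ij}\sim|w|^3/t^2$, which is indeed what produces the $t^{-p/2}$ rate), followed by polynomial absorption, drift removal and covariance comparison. Your route buys a self-contained argument that makes the origin of the rate transparent and treats the cross term on the same footing as the diffusion terms, at the cost of the bookkeeping you acknowledge and of re-deriving the cited Gaussian kernel estimates; the paper's route is shorter because it reuses those estimates and only has to control the one-dimensional extra factor. Two small caveats, neither fatal: for non-integer $p$ the statement that $|\theta_t|^p$ is ``a finite sum of monomials'' needs the elementary inequality $(\sum_\alpha A_\alpha)^p\le n^{p-1}\sum_\alpha A_\alpha^p$ for $A_\alpha\ge0$ (in the application $p\in\{1,2\}$, so this is cosmetic), and your claim that $\partial^2_{ij}a^{ij}(y_2)$ is bounded uses slightly more smoothness of $a$ than Assumption \ref{dasm} literally grants --- but the paper's appeal to Corollary 4.2 of Andersson--Kohatsu-Higa rests on the same hypothesis, so this is an issue you share with, rather than introduce into, the argument.
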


\begin{proof}[Proof of Lemma \ref{lemma_bound_thetaq}]
Firstly, by definition of $\theta$ in $\eqref{eqn:def_theta}$, we have
\begin{equation}\label{pf_lemma_2_first_ineq}
\begin{aligned}
&\hspace{6mm}|\theta_t(x_1, x_2)q_t(x_1, x_2)^{1/p}|\\
&\leq |\vartheta_t(y_1, y_2)(q_t(y_1, y_2)q_t(z_1, z_2))^{1/p}| + \left|(\lambda(y_2) - \lambda(y_1))q_t(y_1, y_2)^{1/p}\cfrac{z_2 - z_1 - \lambda(y_1)t}{\sigma_A^2t}q_t(z_1, z_2)^{1/p}\right|
\end{aligned}
\end{equation}
The plan is to bound the two terms in $\eqref{pf_lemma_2_first_ineq}$. For the first one, from Corollary 4.2 in \citet{anderssonhiga2017}, we know that there exist $C_T' > 0$ such that
\begin{equation}\label{pf_lemma_2_term_1_1}
\begin{aligned}
|\vartheta_t(y_1, y_2)q_t(y_1, y_2)^{1/p}| &\leq \left(\cfrac{C_T'}{t^{p/2}}\varphi_{4a_2t}(y_2-y_1)\right)^{1/p}.
\end{aligned}
\end{equation}

We can also derive that 

\begin{equation}\label{pf_lemma_2_term_1_2}
\begin{aligned}
q_t(z_1, z_2) &= \cfrac{1}{\sqrt{2\pi\sigma_A^2 t}}\text{exp}\left(-\cfrac{(z_2 - z_1 - \lambda(y_1)t)^2}{2\sigma_A^2t}\right) \leq \cfrac{\sqrt{2}}{\sqrt{2\pi2\sigma_A^2 t}}\text{exp}\left(-\cfrac{(z_2 - z_1 - \lambda(y_1)t)^2}{2\cdot2\sigma_A^2t}\right)\\
& \leq \sqrt{2} \varphi_{2\sigma_A^2 t}(z_2 - z_1 - \lambda(y_1)t).
\end{aligned}
\end{equation}

Combining $\eqref{pf_lemma_2_term_1_1}$ and $\eqref{pf_lemma_2_term_1_2}$ we have

\begin{equation}\label{pf_lemma_2_term_1_final}
\begin{aligned}
|\vartheta_t(y_1, y_2)(q_t(y_1, y_2)q_t(z_1, z_2))^{1/p}| \leq \cfrac{(\sqrt{2}C_T')^{1/p}}{t^{1/2}}\varphi_{4a_2t}(y_2-y_1)^{1/p}\varphi_{2\sigma_A^2 t}(z_2 - z_1 - \lambda(y_1))^{1/p}
\end{aligned}
\end{equation}

For the second term in $\eqref{pf_lemma_2_first_ineq}$, we first denote that since $\lambda(x)$ is bounded, there exists $l_1 > 0$ such that $||\lambda(y_1) - \lambda(y_2)||_2 \leq l_1||y_1 - y_2||_2$. Moreover, from Lemma A.1 in \citet{anderssonhiga2017} we know there exist $C_T^{(2)} = 2^{d/2}e^{||\mu||_0Ta_2^{-1}/4}$ such that

\begin{equation}\label{pf_lemma_2_term_2_fisrt}
\begin{aligned}
& \hspace{6mm}|(\lambda(y_1) - \lambda(y_2))q_t(y_1, y_2)^{1/p}|\\
&\leq l_1||y_2 - y_1||_2q_t(y_1, y_2)^{1/p}\leq l_1C_T^{(2)}||y_2 - y_1||_2\varphi_{2ta(y_1)}(y_2 - y_1)^{1/p}\\
&\leq l_1C_T^{(2)}(2a_2/a_1)^{d/p}(4Ta_2p)\varphi_{4ta_2}(y_2 - y_1 )^{1/p}\\
\end{aligned}
\end{equation}

We apply the same procedure to the second inequality,
\begin{equation}\label{pf_lemma_2_term_2_second}
\begin{aligned}
& \hspace{6mm}\left|\cfrac{z_2 - z_1 - \lambda(y_1)t}{\sigma_A^2t}q_t(z_1, z_2)^{1/p}\right| = \cfrac{\left|z_2 - z_1 - \lambda(y_1)t\right|}{\sigma_A^2t}\cfrac{1}{(2\pi\sigma_A^2 t)^{1/2p}}\text{exp}\left(-\cfrac{(z_2 - z_1 - \lambda(y_1)t)^2}{2\sigma_A^2tp}\right)\\
&\leq \cfrac{1}{\sigma_A^2 t}\varphi_{2\sigma_A^2t}(z_2-z_1-\lambda(y_1)t)^{1/p}\left(2^{1/2p}|z_2 - z_1 -\lambda(y_1)t|\text{exp}\left(-\cfrac{(z_2 - z_1 - \lambda(y_1)t)^2}{4\sigma_A^2tp}\right)\right)\\
&\leq \cfrac{1}{\sigma_A^2 t}\varphi_{2\sigma_A^2t}(z_2-z_1-\lambda(y_1)t)^{1/p}(4\cdot2^{1/2p}\sigma_A^2tp)\\
&\leq \cfrac{T^{1/2}4p2^{1/2p}}{t^{1/2}}\varphi_{2\sigma_A^2t}(z_2-z_1-\lambda(y_1)t)^{1/p},
\end{aligned}
\end{equation}
where the second inequality come from the fact $|x|e^{-x^2}$ is bounded. Therefore, if we combine $\eqref{pf_lemma_2_term_2_fisrt}$ and $\eqref{pf_lemma_2_term_2_second}$ we can have
\begin{equation}\label{pf_lemma_2_term_2_final}
\begin{aligned}
&\hspace{5mm}\left|(\lambda(y_2) - \lambda(y_1))q_t(y_1, y_2)^{1/p}\cfrac{z_2 - z_1 - \lambda(y_1)t}{\sigma_A^2t}q_t(z_1, z_2)^{1/p}\right|\\
&\leq \cfrac{C_T^{(3)}}{t^{1/2}}\varphi_{4a_2t}(y_2 - y_1)^{1/p}\varphi_{2\sigma_A^2t}(z_2 - z_1 - \lambda(y_1)t)^{1/p},
\end{aligned}
\end{equation}
where
\begin{equation*}
\begin{aligned}
C_T^{(3)} = T^{1/2}4p2^{1/2p}l_1C_T^{(2)}(2a_2/a_1)^{d/p}(4Ta_2p).
\end{aligned}
\end{equation*}
Lastly, if we combine $\eqref{pf_lemma_2_first_ineq}$, $\eqref{pf_lemma_2_term_1_final}$ and $\eqref{pf_lemma_2_term_2_final}$ we can have 
\begin{equation*}
\begin{aligned}
|\theta_t(x_1, x_2)q_t(x_1, x_2)^{1/p}| \leq 
\cfrac{C_T^{(3)} + (\sqrt{2}C_T')^{1/p}}{t^{1/2}}\varphi_{4a_2t}(y_2-y_1)^{1/p}\varphi_{2\sigma_A^2 t}(z_2 - z_1 - \lambda(y_1)t)^{1/p},
\end{aligned}
\end{equation*}
thereby finishing the proof.
\end{proof}

\begin{lemma}\label{absolute inequality}
For $a, b > 0$ and $f$ being integrable, we have
\begin{align}
    \int_{+\infty}^{+\infty} e^{|a|x}(e^{bx} + e^{-bx})|f(x)|dx \leq\int_{+\infty}^{+\infty} 2(e^{(a+b)x} + e^{-(a+b)x})|f(x)|dx.
\end{align}
\begin{proof}
\begin{equation*}
    \begin{aligned}
    &\hspace{5mm}\int_{+\infty}^{+\infty} e^{|a|x}(e^{bx} + e^{-bx})|f(x)|dx\\
    &= \int_0^{+\infty}(e^{(a+b)x}+ e^{-(b-a)x})|f(x)|dx+\int_{+\infty}^0(e^{(b-a)x}+ e^{-(b+a)x})|f(x)|dx dx\\
    &\leq\int_{+\infty}^{+\infty}(e^{(a+b)x}+ e^{-(b-a)x})|f(x)|dx+\int_{+\infty}^{+\infty}(e^{(b-a)x}+ e^{-(b+a)x})|f(x)| dx\\
    &\leq2\int_{+\infty}^{+\infty}(e^{(a+b)x}+ e^{-(a+b)x})|f(x)|dx
    \end{aligned}
\end{equation*}
The last inequality holds because the function $e^{x} + e^{-x}$ is odd and monotone on $(0, \infty)$.
\end{proof}
\end{lemma}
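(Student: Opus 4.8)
The plan is to reduce the claimed integral inequality to a \emph{pointwise} bound on the two integrands and then integrate against the nonnegative weight $|f|$. Precisely, it suffices to show that for every $x \in \mathbb{R}$,
\begin{equation*}
e^{|a|x}\big(e^{bx}+e^{-bx}\big) \;\le\; 2\big(e^{(a+b)x}+e^{-(a+b)x}\big);
\end{equation*}
multiplying this by $|f(x)| \ge 0$ and integrating over $\mathbb{R}$ (using monotonicity and linearity of the integral, with integrability of $f$ against these exponential weights making the statement meaningful) yields the lemma at once.

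For the pointwise bound I would pass through $\cosh$. Since $e^{|a|x} = \max(e^{ax},e^{-ax}) \le e^{ax}+e^{-ax} = 2\cosh(ax)$ and $e^{bx}+e^{-bx} = 2\cosh(bx)$, the left-hand side is at most $4\cosh(ax)\cosh(bx)$. Applying the product-to-sum identity $2\cosh(u)\cosh(v)=\cosh(u+v)+\cosh(u-v)$ gives $4\cosh(ax)\cosh(bx)=2\big(\cosh((a+b)x)+\cosh((a-b)x)\big)$, and since $a,b>0$ we have $|a-b|\le a+b$, so by evenness and monotonicity of $\cosh$ on $[0,\infty)$, $\cosh((a-b)x)\le\cosh((a+b)x)$. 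Hence the left-hand side is at most $4\cosh((a+b)x)=2\big(e^{(a+b)x}+e^{-(a+b)x}\big)$, which is exactly the required pointwise inequality.

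The argument is short and I do not anticipate a real obstacle; the only point requiring care is that the ``extra'' term produced by the expansion, $\cosh((a-b)x)$, must be absorbed into $\cosh((a+b)x)$, and this is precisely where the hypothesis $a,b>0$ (hence $|a-b|\le a+b$) is used. If one prefers to avoid hyperbolic functions, an equivalent route is to split on the sign of $x$: for $x\ge 0$, $e^{|a|x}(e^{bx}+e^{-bx})=e^{(a+b)x}+e^{(a-b)x}\le 2e^{(a+b)x}$ because $(a-b)x\le(a+b)x$, while for $x<0$, $e^{|a|x}(e^{bx}+e^{-bx})=e^{(b-a)x}+e^{-(a+b)x}\le 2e^{-(a+b)x}$ because $(b-a)x\le-(a+b)x$; in both cases the bound is dominated by $2(e^{(a+b)x}+e^{-(a+b)x})$. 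I would record whichever version reads more cleanly; both give the constant $2$ stated in the lemma.
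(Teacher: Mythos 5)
Your proof is correct. The route differs slightly from the paper's: the paper works at the level of integrals, splitting $\int_{\mathbb{R}}$ at $x=0$, enlarging each half-line integral back to all of $\mathbb{R}$, and then absorbing the leftover terms $e^{\pm(b-a)x}$ using monotonicity of $\cosh$ (its closing justification, that ``$e^x+e^{-x}$ is odd,'' is evidently a slip for ``even''); you instead establish the pointwise bound $e^{|a|x}(e^{bx}+e^{-bx}) \le 2\bigl(e^{(a+b)x}+e^{-(a+b)x}\bigr)$ — via $4\cosh(ax)\cosh(bx)=2\bigl(\cosh((a+b)x)+\cosh((a-b)x)\bigr)$ and $|a-b|\le a+b$, or equivalently by the sign split done pointwise — and then integrate against $|f|\ge 0$. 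The pointwise version is a bit cleaner and tighter: it avoids the domain-enlarging step entirely and makes transparent exactly where $a,b>0$ enters, while the paper's and your alternative sign-split argument are essentially the same computation. One small notational point: read literally with $a>0$, $e^{|a|x}$ is just $e^{ax}$, whereas the paper's own proof (its $x<0$ case) treats it as $e^{a|x|}$; your identification $e^{|a|x}=\max(e^{ax},e^{-ax})$ matches that intended reading, and since you only use it as an upper bound, your argument covers both interpretations.
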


\begin{lemma}\label{integration}
Let $\varphi_b(x)$ be the one dimensional gaussian density 
\begin{equation*}
    \begin{aligned}
    \varphi_b(x) = \cfrac{1}{\sqrt{2\pi b}}\,e^{-x^2/2b}.
    \end{aligned}
\end{equation*}
We have
\begin{equation*}
    \begin{aligned}
    \int_{+\infty}^{+\infty}e^{ax}\varphi_b(x - y)dx &= e^{a^2b/2}e^{ay}\\
    \int_{+\infty}^{+\infty}2cosh(ax)\varphi_b(x - y)dx &= e^{a^2b/2}2cosh(ay)
    \end{aligned}
\end{equation*}
\begin{proof}
\begin{equation*}
    \begin{aligned}
    \int_{+\infty}^{+\infty}e^{ax}\varphi_b(x - y)dx  &= \int_{+\infty}^{+\infty}\cfrac{1}{\sqrt{2\pi b}}\,e^{-((x-y)^2-2abx)/2b}dx\\
    &= \int_{+\infty}^{+\infty}\cfrac{1}{\sqrt{2\pi b}}\,e^{-((x-(y+ab))^2-2yab-a^2b^2)/2b}dx\\
    &= \int_{+\infty}^{+\infty}\cfrac{1}{\sqrt{2\pi b}}\,e^{-(x-(y+ab))^2/2b}dxe^{ay}e^{a^2b/2} = e^{ay}e^{a^2b/2}\\
    \end{aligned}
\end{equation*}
For the same reason we have
\begin{equation*}
    \begin{aligned}
    \int_{+\infty}^{+\infty}e^{-ax}\varphi_b(x - y)dx &= e^{-ay}e^{a^2b/2}\\
    \end{aligned}
\end{equation*}
thereby finishing the proof.
\end{proof}
\end{lemma}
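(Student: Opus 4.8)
The plan is to reduce both identities to the elementary ``complete the square'' computation of a Gaussian moment generating function. For the first identity, I would insert the explicit density $\varphi_b(x-y) = (2\pi b)^{-1/2} e^{-(x-y)^2/2b}$ and collect the exponent of the integrand as $ax - (x-y)^2/(2b)$. The key algebraic step is to rewrite this as
\[
ax - \frac{(x-y)^2}{2b} \;=\; -\frac{\bigl(x - (y+ab)\bigr)^2}{2b} \;+\; ay \;+\; \frac{a^2 b}{2},
\]
which one checks by expanding $(x-(y+ab))^2 = (x-y)^2 - 2ab(x-y) + a^2b^2$ and tracking the cross terms $-2yab - a^2b^2$. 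After factoring out the constants $e^{ay}e^{a^2b/2}$, the remaining integral is $\int_{-\infty}^{\infty}\varphi_b\bigl(x-(y+ab)\bigr)\,dx = 1$, since $\varphi_b(\,\cdot\,-(y+ab))$ is the density of an $N(y+ab,\,b)$ random variable. This yields $\int_{-\infty}^{\infty} e^{ax}\varphi_b(x-y)\,dx = e^{ay}e^{a^2 b/2}$, and applying the same computation with $a$ replaced by $-a$ gives $\int_{-\infty}^{\infty} e^{-ax}\varphi_b(x-y)\,dx = e^{-ay}e^{a^2 b/2}$ (note $(-a)^2 = a^2$, so the prefactor is unchanged).

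For the second identity, I would simply use $2\cosh(ax) = e^{ax} + e^{-ax}$, split the integral by linearity, and substitute the two expressions just obtained:
\[
\int_{-\infty}^{\infty} 2\cosh(ax)\,\varphi_b(x-y)\,dx = e^{a^2b/2}e^{ay} + e^{a^2b/2}e^{-ay} = e^{a^2b/2}\,2\cosh(ay).
\]
The interchange of sum and integral is trivially justified since both summands are nonnegative (or, if one prefers, by absolute integrability of $e^{|a|x}\varphi_b(x-y)$, which follows from the first computation applied to $|a|$).

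I do not anticipate any real obstacle: the statement is a standard Gaussian identity and the only thing requiring attention is the bookkeeping in the completion of the square and the (evident) correction of the typo $\int_{+\infty}^{+\infty}$ to $\int_{-\infty}^{+\infty}$. An alternative one-line proof would be to invoke the known moment generating function of a normal law, namely that $\xi \sim N(y,b)$ implies $\mathbb{E}[e^{a\xi}] = e^{ay + a^2 b/2}$, but I would keep the self-contained argument above so that the appendix remains elementary and does not depend on external references.
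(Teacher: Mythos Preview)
Your proposal is correct and follows essentially the same approach as the paper: complete the square in the exponent to evaluate $\int e^{ax}\varphi_b(x-y)\,dx$, replace $a$ by $-a$ for the companion identity, and add to obtain the $\cosh$ formula. The only differences are cosmetic (you spell out the $\cosh$ step explicitly and flag the limits-of-integration typo), so nothing further is needed.
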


\subsection{Proof of  Lemma \ref{lemma_para_var}}
We denote $dx_n = dx_{s_n} \cdots dx_{s_1}$, where $x = (y, z)$, and denote $\xi_{i+1} = s_{i+1} - s_{i}$ for $i \leq n+1$. Also notice that we denote $T = s_{n+1}$ for notational convenience. We have
\begin{equation*}
\begin{aligned}
&\hspace{5mm}\mathbb{E}\left[\left|\frac{e^{-Z^{\pi}_T+c_1||Y_T^\pi||_1+c_2}}{p_n(s_1, \cdots , s_n)}\prod_{i=1}^{n}\theta_{s_i - s_{i-1}}(X_{s_{i-1}}^{\pi}, X_{s_i}^{\pi})\right|^p\right]\\
&=\int \cdots \int \frac{e^{-pz_T+c_1p||y_T||_1+c_2p}}{p_n(s_1, \cdots , s_n)^p}q_{T-s_n}(x_{s_n}, x_T)\\
&\hspace{10mm}\cdot\left(\prod_{i=1}^{n}|\theta_{s_i-s_{i-1}}(x_{s_{i-1}}, x_{s_i})|^pq_{s_i-s_{i-1}}(x_{s_{i-1}}, x_{s_i})\right)dx_Tdx_{s_n}\cdots dx_{s_1}.\\
\end{aligned}
\end{equation*}
We first integrate $x_T = (y_T, z_T)$, the corresponding component in the inner integral could be decomposed as
\begin{equation}\label{lem_3_0_1_decompose_z}
\begin{aligned}
&\hspace{5mm}\int e^{-pz_T+c_1p||y_T||_1+c_2p}q_{T-s_n}(x_{s_n}, x_T)dx_T\\ &= \int\int e^{-pz_T}q_{T-s_n}(z_{s_n}, z_T)dz_T\cdot e^{c_1p||y_T||_1+c_2p}q_{T-s_n}(y_{s_n}, y_T)dy_T\\
\end{aligned}
\end{equation}
Since $q_{T-s_n}(z_{s_n}, z_T)$ is the one dimensional gaussian density with mean $z_T - z_{s_n} - \lambda(y_{s_n})\xi_{n+1}$ and variance $\sigma_A^2\xi_{n+1}$, we have
\begin{equation}\label{notation_begin}
\begin{aligned}
\int e^{-p(z_T - z_{t_{n}} - \lambda(y_{s_n})\xi_{n+1})}q_{\xi_{n+1}}(z_{s_n}, z_T)dz_T = e^{\sigma_A^2p^2\xi_{n+1}/2}.
\end{aligned}
\end{equation}
For the cleanness of notation, we denote $M_p := e^{\sigma_A^2p^2T}$ and
\begin{equation}\label{notation_end}
\begin{aligned}
M(p,i) := e^{\sigma_A^2p^2\xi_{i}/2} \leq M_p.
\end{aligned}
\end{equation}
Then $\eqref{lem_3_0_1_decompose_z}$ becomes
\begin{equation}\label{lem_3_0_2_decompose_y}
\begin{aligned}
&\hspace{5mm}\int e^{-pz_T+c_1p||y_T||_1+c_2p}q_{\xi_{n+1}}(x_{s_n}, x_T)dx_T\\
&= M(p,n)e^{-pz_{s_n} - p\lambda(y_{s_n})\xi_{n+1}}\int e^{c_1p||y_T||_1+c_2p}q_{\xi_{n+1}}(y_{s_n}, y_T)dy_T\\
&\leq M_pe^{-pz_{s_n} - p\lambda(y_{s_n})\xi_{n+1}}\int e^{c_1p||y_T||_1+c_2p}q_{\xi_{n+1}}(y_{s_n}, y_T)dy_T.\\
\end{aligned}
\end{equation}
From Lemma A.1. in \citet{anderssonhiga2017} we know that there exist $C_T' = 2^{d/2}e^{\frac{1}{4}||\mu||_0Ta_2^{-1}}$ such that 
\begin{equation*}
\begin{aligned}
\int e^{c_1p||y_T||_1+c_2p}q_{\xi_{n+1}}(y_{s_n}, y_T)dy_T \leq C_T'\int e^{c_1p||y_T||_1+c_2p}\varphi_{2a_2\xi_{n+1}}(y_T - y_{s_n})dy_T.
\end{aligned}
\end{equation*}
Since now we are dealing with the multigaussian density of covariance matrix $2a_2\xi_{n+1}I$ we can integrate each component one by one. If we denote $y^{(i)}$ to be the $i$-th component, and with an abuse of notation we denote $\varphi_a$ to be the one dimensional gaussian density with variance $a$, we have 
\begin{equation}\label{lem_3_0_3_decompose_comp}
\begin{aligned}
\int e^{c_1p||y_T||_1+c_2p}q_{\xi_{n+1}}(y_{s_n}, y_T)dy_T &\leq e^{c_2p}C_T'\prod_{i=1}^d\left(\int e^{c_1p|y_T^{(i)}|}\varphi_{2a_2\xi_{n+1}}(y_T^{(i)} - y_{s_n}^{(i)})dy_T^{(i)}\right)\\
&\leq e^{c_2p}C_T'\prod_{i=1}^d\left(\int \left(e^{c_1py_T^{(i)}}+e^{-c_1py_T^{(i)}}\right)\varphi_{2a_2\xi_{n+1}}(y_T^{(i)} - y_{s_n}^{(i)})dy_T^{(i)}\right)\\
&= e^{c_2p}C_T'\prod_{i=1}^d\left(\int 2\cosh\left(c_1py_T^{(i)}\right)\varphi_{2a_2\xi_{n+1}}(y_T^{(i)} - y_{s_n}^{(i)})dy_T^{(i)}\right).\\
\end{aligned}
\end{equation}
By using Lemma \ref{integration} we have that 

\begin{equation}\label{lem_3_0_4_bound}
\begin{aligned}
\prod_{i=1}^d\left(\int \cosh\left(c_1py_T^{(i)}\right)\varphi_{2a_2\xi_{n+1}}(y_T^{(i)} - y_{s_n}^{(i)})dy_T^{(i)}\right) \leq e^{c_1^2p^2a_2d\xi_{n+1}}\prod_{i=1}^d \cosh\left(c_1py_{s_n}^{(i)}\right)
\end{aligned}
\end{equation}
By combining $\eqref{lem_3_0_1_decompose_z}$, $\eqref{lem_3_0_2_decompose_y}$, $\eqref{lem_3_0_3_decompose_comp}$,
$\eqref{lem_3_0_4_bound}$ and the fact that $\lambda(y) > \lambda_1$ we have that 

\begin{equation*}
\begin{aligned}
\int e^{-pz_T+c_1p||y_T||_1+c_2p}q_{T-s_n}(x_{s_n}, x_T)dx_T &\leq K_{n+1}e^{-pz_{s_n}}\prod_{i=1}^d \cosh\left(c_1py_{s_n}^{(i)}\right),
\end{aligned}
\end{equation*}
where $K_{n+1} = M_pC_T'\exp(c_2p+d\ln(2)+(c_1^2p^2a_2d-p\lambda_1)\xi_{n+1})$. Therefore, for $n > 0$ we have that 

\begin{equation}\label{lem_3_1_0_begin}
\begin{aligned}
&\hspace{5mm}\mathbb{E}\left[\left|e^{-Z^{\pi}_T}f(Y_T^{\pi})\prod_{i=1}^{n}\theta_{s_i - s_{i-1}}(X_{s_{i-1}}^{\pi}, X_{s_i}^{\pi})\right|^p\right]\\
&\leq K_{n+1}\int \cdots \int e^{-pz_{s_n}}\prod_{i=1}^d \cosh\left(c_1py_{s_n}^{(i)}\right)\\
&\hspace{20mm}\cdot\left(\prod_{i=1}^{n}|\theta_{s_i-s_{i-1}}(x_{s_{i-1}}, x_{s_i})|^pq_{s_i-s_{i-1}}(x_{s_{i-1}}, x_{s_i})\right)dx_{s_n}\cdots dx_{s_1}.
\end{aligned}
\end{equation}
Then we begin integrate the equation above. Starting with $x_{s_n}$ first, by using Lemma \ref{lemma_bound_thetaq} and denote $\varphi_{2\sigma_A^2\xi_{n}}(z_{s_n}-z_{s_{n-1}}-\lambda(y_{s_{n}})\xi_{n}) = q_{2\sigma_A^2\xi_{n}}(z_{s_{n-1}}, z_{s_n},y_{s_{n-1}})$, we have

\begin{equation}\label{lem_3_1_1_z}
\begin{aligned}
& \hspace{5mm} \int e^{-pz_{s_n}}\prod_{i=1}^d \cosh\left(c_1py_{s_n}^{(i)}\right)|\theta_{s_n-s_{n-1}}(x_{s_{n-1}}, x_{s_n})|^pq_{t_{n}-s_{n-1}}(x_{s_{n-1}}, x_{s_n})dx_{s_n}\\
&\leq \int e^{-pz_{s_n}}\prod_{i=1}^d \cosh\left(c_1py_{s_n}^{(i)}\right)\cfrac{C_T}{(\xi_{n})^{p/2}}\varphi_{4a_2\xi_{n}}(y_{s_n}-y_{s_{n-1}})q_{2\sigma_A^2\xi_{n}}(z_{s_{n-1}}, z_{s_n},y_{s_{n-1}})dx_{s_n}\\
&= \cfrac{C_T}{(\xi_{n})^{p/2}}\int\int e^{-pz_{s_n}}q_{2\sigma_A^2\xi_{n}}(z_{s_{n-1}}, z_{s_n},y_{s_{n-1}})dz_{s_n}\prod_{i=1}^d \cosh\left(c_1py_{s_n}^{(i)}\right)\varphi_{4a_2\xi_{n}}(y_{s_n}-y_{s_{n-1}})dy_{s_n}\\
&\leq \cfrac{C_T}{(\xi_{n})^{p/2}}M_pe^{-pz_{s_{n-1}}-p\lambda(y_{s_{n-1}})\xi_{n}}\int \prod_{i=1}^d \cosh\left(c_1py_{s_n}^{(i)}\right)\varphi_{4a_2\xi_{n}}(y_{s_n}-y_{s_{n-1}})dy_{s_n},
\end{aligned}
\end{equation}
where the last equation holds because we are using $\eqref{notation_begin}$. For the $y$ component, from Lemma \ref{absolute inequality} and Lemma \ref{integration} we know that 

\begin{equation}\label{lem_3_1_2_y}
\begin{aligned}
&\hspace{5mm}\int \prod_{i=1}^d \cosh\left(c_1py_{s_n}^{(i)}\right)\varphi_{4a_2\xi_{n}}(y_{s_n}-y_{s_{n-1}})dy_{s_n}\\
&\leq \prod_{i=1}^d \left(\int \cosh\left(c_1py_{s_n}^{(i)}\right)\varphi_{4a_2\xi_{n}}(y_{s_n}^{(i)} - y_{s_{n-1}}^{(i)})dy_{s_n}^{(i)}\right)\\
& = e^{2c_1^2p^2a_2d\xi_{n}}\prod_{i=1}^d  \cosh\left( c_1py_{s_{n-1}}^{(i)}\right).\\
\end{aligned}
\end{equation}
By combining $\eqref{lem_3_1_0_begin}$, $\eqref{lem_3_1_1_z}$ and $\eqref{lem_3_1_2_y}$, if we denote 

\begin{equation*}
\begin{aligned}
K_n = M_pC_T\exp\left((2c_1^2p^2a_2d -p\lambda_1)\xi_{n}\right),
\end{aligned}
\end{equation*}
we have 
\begin{equation}\label{lem_3_2_0_begin}
\begin{aligned}
&\hspace{5mm}\mathbb{E}\left[\left|\frac{e^{-Z^{\pi}_T+c_1||Y_T^\pi||_1+c_2}}{p_n(s_1, \cdots , s_n)}\prod_{i=1}^{n}\theta_{s_i - s_{i-1}}(X_{s_{i-1}}^{\pi}, X_{s_i}^{\pi})\right|^p\right]\\
&\leq K_{n+1}\cfrac{K_n}{\xi_{n}^{p/2}p_n(s_1, \cdots , s_n)^p}\int \cdots \int e^{-pz_{s_{n-1}}}\left(\prod_{i=1}^d\cosh(c_1py_{s_{n-1}}^{(i)})\right)\\
&\hspace{30mm}\cdot\left(\prod_{i=1}^{n-1}|\theta_{s_i-s_{i-1}}(x_{s_{i-1}}, x_{s_i})|^pq_{s_i-s_{i-1}}(x_{s_{i-1}}, x_{s_i})\right)dx_{s_{n-1}}\cdots dx_{s_1}\\
\end{aligned}
\end{equation}
We then start to integrate $x_{s_{n-1}}$. One can notice that for any $ 0 < j < n$, by following the same methodology in $\eqref{lem_3_1_1_z}$ and $\eqref{lem_3_1_2_y}$, we have
\begin{equation}\label{lem_3_2_1_begin}
\begin{aligned}
&\hspace{5mm}\int e^{-pz_{s_j}}\left(\prod_{i=1}^d\cosh(c_1py_{s_{j}}^{(i)})\right)|\theta_{s_j-s_{j-1}}(x_{s_{j-1}}, x_{s_j})|^pq_{s_j-s_{j-1}}(x_{s_{j-1}}, x_{s_j})dx_{s_j}\\
&\leq \cfrac{M_pC_T}{\xi_j^{p/2}}e^{-pz_{s_{j-1}}-p\lambda(y_{s_{j-1}})\xi_j}\int \left(\prod_{i=1}^d \cosh(c_1py_{s_j}^{(i)})\varphi_{4a_2\xi_j}(y_{s_j}^{(i)} - y_{s_{j-1}}^{(i)})\right)dy _{s_j}\\
&= \cfrac{M_pC_T}{\xi_j^{p/2}}e^{-pz_{s_{j-1}}-p\lambda(y_{s_{j-1}})\xi_j}e^{2c_1^2p^2a_2d\xi_j}\prod_{i=1}^d \cosh(c_1py_{s_{j-1}}^{(i)})\\
&\leq \cfrac{K_j}{\xi_j^{p/2}}e^{-pz_{s_{j-1}}}\prod_{i=1}^d \cosh(c_1py_{s_{j-1}}^{(i)}),\\
\end{aligned}
\end{equation}
where 
\begin{equation*}
\begin{aligned}
K_j = M_pC_T\exp((2c_1^2p^2a_2d - p\lambda_1)\xi_j).
\end{aligned}
\end{equation*}
Therefore, by adapting $\eqref{lem_3_2_1_begin}$ to $\eqref{lem_3_2_0_begin}$, we have

\begin{equation*}
\begin{aligned}
&\hspace{5mm}\mathbb{E}\left[\left|\frac{e^{-Z^{\pi}_T+c_1||Y_T^\pi||_1+c_2}}{p_n(s_1, \cdots , s_n)}\prod_{i=1}^{n}\theta_{s_i - s_{i-1}}(X_{s_{i-1}}^{\pi}, X_{s_i}^{\pi})\right|^p\right]\\
&\leq \cfrac{\prod_{j=1}^{n+1}K_j}{p_n(s_1, \cdots , s_n)^p\prod_{j=1}^{n}(s_j - s_{j-1})^{p/2}}\prod_{i=1}^d \cosh(c_1py_{0}^{(i)})\\
&\leq \cfrac{\bar{M}_T}{p_n(s_1, \cdots , s_n)^p\prod_{j=1}^{n}(s_j - s_{j-1})^{p/2}}\prod_{i=1}^d \cosh(c_1py_{0}^{(i)})
\end{aligned}
\end{equation*}
where the last line come form the fact that $$\bar{M}(T,p) :=  C_T\exp((2c_1^2p^2a_2d-p\lambda_1 + \sigma_A^2p^2)T) = \prod_{j=1}^{n+1}K_j.$$
Having showing the above, using the same argument in equation (14) of \citet{anderssonhiga2017} (notice that our case corresponds to $\zeta = 1/2$ as the ``forward case'' mentioned in the paper), we have
\begin{equation*}
    \begin{aligned}
    &\hspace{5mm}\mathbb{E}\left[\left|\frac{e^{-Z^{\pi}_T+c_1||Y_T^\pi||_1+c_2}}{p_{N_T}(\tau_1, \cdots , \tau_{N_T})}\prod_{i=1}^{N_T}\theta_{\tau_i - \tau_{i-1}}(X_{\tau_{i-1}}^{\pi}, X_{\tau_i}^{\pi})\right|^p\right]\\
    &= \sum_{n=0}^{+\infty}\int_{S^n}\mathbb{E}\left[\left|\frac{e^{-Z^{\pi}_T+c_1||Y_T^\pi||_1+c_2}}{p_n(s_1, \cdots , s_n)}\prod_{i=1}^{n}\theta_{s_i - s_{i-1}}(X_{s_{i-1}}^{\pi}, X_{s_i}^{\pi})\right|^p\right]p_n(s_1,\cdots,s_n)ds\\
    &\leq \sum_{n=0}^{+\infty}\left(\left(\bar{M}(T,p)\prod_{i=1}^d \cosh(c_1py_{0}^{(i)})\right)\int_{S^n}\frac{1}{p_n(s_1, \cdots , s_n)^{p-1}}\prod_{j=1}^{n}(s_j - s_{j-1})^{-p/2}ds\right)\\
    \end{aligned}
\end{equation*}
Since we know $\sum_{n=0}^{\infty}\int_{S^n}\frac{1}{p_n(s_1, \cdots , s_n)^{p-1}}\prod_{j=1}^{n}(s_j - s_{j-1})^{-p/2}ds$ is finite \citep[Proposition 7.3]{anderssonhiga2017}, by defining 
\begin{equation}\label{eqn_MT}
    \begin{aligned}
        M(T,p) := \bar{M}(T,p)\sum_{n=0}^{\infty}\int_{S^n}\frac{1}{p_n(s_1, \cdots , s_n)^{p-1}}\prod_{j=1}^{n}(s_j - s_{j-1})^{-p/2}ds,
    \end{aligned}
\end{equation}
we know that 
\begin{equation*}
    \begin{aligned}
    \mathbb{E}\left[\left|\frac{e^{-Z^{\pi}_T+c_1||Y_T^\pi||_1+c_2}}{p_{N_T}(\tau_1, \cdots , \tau_{N_T})}\prod_{i=1}^{N_T}\theta_{\tau_i - \tau_{i-1}}(X_{\tau_{i-1}}^{\pi}, X_{\tau_i}^{\pi})\right|^p\right] \leq M(T,p)\prod_{i=1}^d \cosh(c_1py_{0}^{(i)}),
    \end{aligned}
\end{equation*}
thereby finishing the proof.

\subsection{Proof of Lemma \ref{lem_para_expectation}}
By defining $f_K(x) = (f(x) \wedge K) \vee (-K)$ for $K > 0$, we know that 
$f_K$ will be bounded and measureable. By an argument identical to Lemma $3.1$ of \citet{chen2019} (see also 
Remark $3.2$ in that reference) we know 
\begin{equation}\label{pf_thm1_eqn_1}
\begin{aligned}
\mathbb{E}\left[e^{(-\bar{A}_T) \wedge K}f_K(Y_T)\right] &= \mathbb{E}\left[e^{(-\bar{A}_T^\pi)\wedge K}f_K(Y_T^{\pi})\Theta_2(Z^{\pi}, T)\right].
\end{aligned}
\end{equation}
Our goal is to apply dominated convergence theorem to the both side of the equation. We first focus on the limit of RHS. From Lemma \ref{lemma_para_var} we know
\begin{equation*}
\begin{aligned}
\mathbb{E}\left[\left|e^{-\bar{A}^{\pi}_T}f(Y_T^{\pi})\Theta_2(Z^{\pi}, T)\right|\right] < +\infty.\\
\end{aligned}
\end{equation*}
Henceforth by applying Dominated convergence theorem on the RHS of $\eqref{pf_thm1_eqn_1}$ we have 

\begin{equation*}
\begin{aligned}
\lim_{K\to +\infty} \mathbb{E}\left[e^{(-\bar{A}_T^\pi)\wedge K}f_K(Y_T^{\pi})\Theta_2(Z^{\pi}, T)\right]=\mathbb{E}\left[e^{-\bar{A}_T^\pi}f(Y_T^{\pi})\Theta_2(Z^{\pi}, T)\right].
\end{aligned}
\end{equation*}
For the LHS, we have 
\begin{equation*}
\begin{aligned}
e^{(-\bar{A}_T)\wedge K}f_K(Y_T) \leq e^{-\bar{A}_T}e^{c_1||Y_T||_1 + c_2}. 
\end{aligned}
\end{equation*}
Then from \citet{menozzi2021}, since $\bar{A}_T$ has at most linear growth of drift and the diffusion matrix satisfy the conditions, $Z = (Y, \bar{A})$ will have a density of gaussian type upper bound, so we have
\begin{equation*}
\begin{aligned}
\mathbb{E}\left[e^{-\bar{A}_T}e^{c_1||Y_T||_1 + c_2}\right] < +\infty. 
\end{aligned}
\end{equation*}
By applying dominated convergence theorem on both sides of $\eqref{pf_thm1_eqn_1}$ we have that 
\begin{equation*}
\begin{aligned}
\mathbb{E}\left[e^{-\bar{A}_T}f(Y_T)\right] &= \mathbb{E}\left[e^{-\bar{A}_T^\pi}f(Y_T^{\pi})\Theta_2(Z^{\pi}, T)\right],
\end{aligned}
\end{equation*}
thereby finishing the proof.

\end{document}